\newcommand{\urlwofont}[1]{\urlstyle{same}\url{#1}}
\newcommand{\nc}{\newcommand}
\nc{\nt}{\newtheorem}
\nc{\dmo}{\DeclareMathOperator}
\theoremstyle{plain}
\newtheorem{theorem}{Theorem}[section]
\newtheorem{maintheorem}{Theorem}
\newtheorem{prop}[theorem]{Proposition}
\newtheorem{lemma}[theorem]{Lemma}
\newtheorem{corollary}[theorem]{Corollary}
\theoremstyle{definition}
\theoremstyle{remark}
\DeclareMathOperator{\Mod}{Mod}
\dmo{\SMod}{SMod}
\dmo{\PMod}{PMod}
\dmo{\SHomeo}{SHomeo}
\dmo{\SI}{\mathcal{SI}}
\dmo{\SSp}{SSp}
\dmo{\PSp}{PSp}
\DeclareMathOperator{\Sp}{Sp}
\DeclareMathOperator{\GL}{GL}
\newcommand\Z{\ensuremath{\mathbb{Z}}}
\nc{\p}[1]{\noindent {\bf #1.}}
\nc{\margin}[1]{\marginpar{\scriptsize #1}}
\nc{\PartialIBases}{\mathfrak{IB}}
\nc{\PartialIBasesEx}{\widehat{\mathfrak{IB}}}
\nc{\PartialBases}{\mathfrak{B}}
\nc{\Building}{\mathfrak{T}}
\nc{\height}{\ensuremath{\text{ht}}}
\nc{\Poset}{\mathfrak{P}}
\nc{\Field}{\mathbb{F}}
\nc{\Link}{\ensuremath{\text{Link}}}
\nc{\Star}{\ensuremath{\text{Star}}}
\nc{\SymTorelli}{\ensuremath{\mathcal{SI}}}
\nc{\BTorelli}{\ensuremath{\mathcal{BI}}}
\dmo{\Braid}{\ensuremath{B}}
\dmo{\PureBraid}{\ensuremath{PB}}
\nc{\Hyper}{\ensuremath{\iota}}
\nc{\BigFreeProd}{\mathop{\mbox{\Huge{$\ast$}}}}
\nc{\Quotient}{\ensuremath{\mathcal{Q}}}
\nc{\QuotientEx}{\ensuremath{\widehat{\mathcal{Q}}}}
\nc{\Presentation}[2]{\ensuremath{\text{$\langle #1$ $|$ $#2 \rangle$}}}
\nc{\SpGen}{\ensuremath{S_{\text{Sp}}}}
\nc{\SpRel}{\ensuremath{R_{\text{Sp}}}}
\nc{\QGen}{\ensuremath{S_{\mathcal{Q}}}}
\nc{\QRel}{\ensuremath{R_{\mathcal{Q}}}}
\nc{\PBs}{\ensuremath{T}}
\nc{\Qs}{\ensuremath{\overline{s}}}
\dmo{\PB}{PB}
\nc{\BIredg}{\mathcal{BI}_{2g+1}^{\text{red}}}
\nc{\BI}{\mathcal{BI}}
\dmo{\D}{D}
\dmo{\Stab}{Stab}
\dmo{\Surger}{Surger}
\nc{\I}{\mathcal{I}}
\nc{\spanmap}{span}
\nc{\genbygen}[2]{\premonoid{#1}{#2}}
\nc{\premonoid}[2]{#1 \circledcirc #2}
\nc{\monoid}[2]{#1 \odot #2}
\nc{\G}{\Gamma}
\nc{\raag}{A_\Gamma}
\nc{\racg}{W_\G}
\nc{\raagdelt}{A_\Delta}
\dmo{\Aut}{Aut}
\dmo{\Out}{Out}
\nc{\Autraag}{\Aut(\raag)}
\nc{\Outraag}{\Out(\raag)}
\nc{\diag}{D_\G}
\nc{\Autraagdelt}{\Aut(A_\Delta)}
\nc{\glk}{\GL(k,\mathbb{Z})}
\nc{\gln}{\GL(n,\mathbb{Z})}
\nc{\glnt}{\GL(n,\mathbb{Z} / 2)}
\nc{\glkdelt}{\GL(k |\Delta| ,\mathbb{Z})}
\nc{\gldelt}{\GL(|\Delta| ,\mathbb{Z})}
\nc{\zkdelt}{\mathbb{Z}^{k|\Delta|}}
\nc{\join}{\mathcal{J}}
\nc{\pc}{\mathrm{PC}}
\dmo{\lk}{lk}
\dmo{\st}{st}
\dmo{\Inn}{Inn}
\nc{\pia}{\Pi \mathrm{A}}
\nc{\piaG}{\pia_\G}
\nc{\ppia}{\mathrm{P \Pi A}}
\nc{\ppiaG}{\mathrm{P \Pi A}_\G}
\nc{\epia}{\mathrm{E \Pi A}}
\nc{\epiaG}{\mathrm{E \Pi A}_\G}
\nc{\ptor}{\mathcal{PI}}
\nc{\ptorG}{\mathcal{PI}_\G}
\nc{\CGi}{C_\G(\iota)}
\nc{\pbc}{\mathfrak{B}^\pi}
\dmo{\Cay}{Cay}
\dmo{\rev}{rev}
\nc{\Autfn}{\Aut(F_n)}
\dmo{\supp}{supp}
\dmo{\rk}{\mathrm{rk}}
\title{\vspace{-30pt} Palindromic automorphisms of right-angled Artin groups}
\author{Neil J. Fullarton and Anne Thomas\vspace{-6pt}}
\begin{document}

\newcounter{enumi_saved}

\maketitle

\vspace{-23pt}
\begin{abstract}We introduce the palindromic automorphism group and the palindromic Torelli group of a right-angled Artin group $\raag$. The palindromic automorphism group $\pia_\G$ is related to the principal congruence subgroups of $\gln$ and to the hyperelliptic mapping class group of an oriented surface, and sits inside the centraliser of a certain hyperelliptic involution in $\Autraag$. We obtain finite generating sets for $\pia_\G$ and for this centraliser, and determine precisely when these two groups coincide. We also find generators for the palindromic Torelli group. \end{abstract}

\section{Introduction}\label{Intro}

 Let $\Gamma$ be a finite simplicial graph, with vertex set $V = \{ v_1, \dots, v_n \}$. Let $E \subset V \times V$ be the edge set of $\Gamma$. The graph $\Gamma$ defines the \emph{right-angled Artin group} $\raag$ via the presentation
\[ \raag = \langle v_i \in V \mid [v_i,v_j] = 1 \mbox{ iff } (v_i,v_j) \in E \rangle . \]

One motivation, among many, for studying right-angled Artin groups and their automorphisms (see Agol~\cite{Ago13} and Charney~\cite{Cha07} for others) is that the groups $\raag$ and $\Autraag$ allow us to interpolate between families of groups that are classically well-studied: we may pass between the free group $F_n$ and free abelian group $\Z^n$, between their automorphism groups~$\Autfn$ and $\Aut(\Z^n) = \gln$, and even between the mapping class group~$\Mod(S_g)$ of the oriented surface $S_g$ of genus $g$ and the symplectic group $\Sp(2g, \Z)$ (this last interpolation is explained in \cite{Day09_symp}).  See Section~\ref{Prelims} for background on right-angled Artin groups and their automorphisms.

In this paper, we introduce a new subgroup of $\Autraag$ consisting of so-called `palindromic' automorphisms of $\raag$, which allows us a further interpolation, between certain previously well-studied subgroups of $\Aut(F_n)$ and of $\gln$.  An automorphism $\alpha \in \Autraag$ is said to be \emph{palindromic} if $\alpha(v) \in \raag$ is a palindrome for each $v \in V$; that is, each $\alpha(v)$ may be expressed as a word $u_1 \dots u_k$ on $V^{\pm1}$ such that $u_1 \dots u_k$ and its reverse $u_k \dots u_1$ are identical as words. The collection $\pia_\G$ of palindromic automorphisms is, a priori, only a subset of $\Autraag$.  While it is easy to see that $\pia_\G$ is closed under composition, it is not obvious that it is closed under inversion.  In  Corollary~\ref{piaG} we prove that $\pia_\G$ is in fact a subgroup of $\Autraag$.  We thus refer to $\pia_\G$ as the \emph{palindromic automorphism group of $\raag$}.  

When $\raag$ is free, the group $\pia_\G$ is equal to the palindromic automorphism group $\pia_n$ of~$F_n$, which was introduced by Collins~\cite{Col95}.  Collins proved that $\pia_n$ is finitely presented and provided an explicit finite presentation.  The group $\pia_n$ has also been studied by Glover--Jensen~\cite{GJ00}, who showed, for instance, that it has virtual cohomological dimension~$n - 1$.  At the other extreme, when $\raag$ is free abelian, the group $\pia_\G$ is the principal level 2 congruence subgroup $\Lambda_n[2]$ of $\gln$.  Thus $\pia_\G$ enables us to interpolate between these two classes of groups. 

Let $\iota$ be the automorphism of $\raag$ that inverts each $v \in V$.  In the case that $\raag$ is free, it is easy to verify that the palindromic automorphism group $\pia_\G = \pia_n$ is equal to the centraliser $C_\G(\iota)$  of $\iota$ in $\Autraag$ (hence $\pia_n$ is a group).  For a general $\raag$, we prove that~$\pia_\G$ is a finite index subgroup of $C_\G(\iota)$, by first considering the finite index subgroup of~$\pia_\G$ consisting of `pure' palindromic automorphisms; see Theorem~\ref{exact} and Corollary~\ref{piaG}.  The index of $\pia_\G$ in $C_\G(\iota)$ depends entirely on connectivity properties of the graph $\G$, and we give conditions on $\G$ that are equivalent to the groups $\pia_\G$ and $C_\G(\iota)$ being equal, in Proposition~\ref{adj_dom}.  In particular, there are non-free $\raag$ such that $\pia_\G = C_\G(\iota)$.

The order 2 automorphism $\iota$ is the obvious analogue in $\Autraag$ of the hyperelliptic involution $s$ of an oriented surface $S_g$, since $\iota$ and $s$ act as $-I$ on $H_1(\raag, \Z)$ and $H_1(S_g, \Z)$, respectively. The group $\pia_\G$ also allows us to generalise a comparison made by the first author in \cite[Section 1]{Ful15_torelli} between $\pia_n \leq \Aut (F_n)$ and the centraliser in $\Mod(S_g)$ of the hyperelliptic involution $s$, which demonstrated a deep connection between these groups. Our study of $\pia_\G$ is thus motivated by its appearance in both algebraic and geometric settings.

The main result of this paper finds a finite generating set for $\pia_\G$. Our generating set includes the so-called \emph{diagram automorphisms} of $\raag$, which are induced by graph symmetries of $\G$, and the \emph{inversions} $\iota_j \in \Autraag$, with $\iota_j$ mapping $v_j$ to ${v_j}^{-1}$ and fixing every $v_k \in V \setminus \{v_j \}$. The function $P_{ij}: V \to \raag$ sending $v_i$ to $v_j v_i v_j$ and $v_k$ to $v_k$ ($k \neq i$) induces a well-defined automorphism of $\raag$, also denoted $P_{ij}$, whenever certain connectivity properties of $\Gamma$ hold (see Section~\ref{purepals}). We establish that these three types of palindromic automorphisms suffice to generate $\pia_\G$.

\begin{maintheorem}\label{palgens}The group $\pia_\G$ is generated by the finite set of diagram automorphisms, inversions and well-defined automorphisms $P_{ij}$. \end{maintheorem}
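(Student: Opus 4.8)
The plan is to realise $\pia_\G$ as a group acting on a connected complex of palindromic bases of $\raag$ and to read the generators off from the standard principle that a group acting on a connected complex is generated by the stabiliser of a base vertex together with the elements realising the edges out of it. This is modelled on Day--Putman's use of the complex of partial bases to study $\Aut(F_n)$ and its subgroups. The vertices of the relevant complex $\PartialIBases$ are the palindromic (partial) bases of $\raag$, taken up to inverting individual basis elements, with higher simplices recording compatible nested families; $\pia_\G$ then acts simplicially, fixing the standard basis as a chosen base vertex.

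With this action in hand, I would identify the two contributions. The stabiliser of the base vertex should consist of those palindromic automorphisms preserving the standard basis up to inversions and graph symmetries, namely the subgroup generated by the inversions $\iota_j$ and the diagram automorphisms. The edges emanating from the base vertex should be realised precisely by the elementary palindromic moves $v_i \mapsto v_j v_i v_j$, that is, by the automorphisms $P_{ij}$; here one must check that an edge is present exactly when $P_{ij}$ is well defined, which is controlled by the same link and domination conditions on $\G$ that appear in Section~\ref{purepals}. Theorem~\ref{exact} enters at this stage, via the finite-index pure subgroup $\ppiaG$, to pin down the stabiliser and to separate the signed-permutation data (recorded by the action on $H_1(\raag;\Z)\cong\Z^n$) from the ``unipotent'' data realised by the $P_{ij}$.

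The main obstacle is proving that $\PartialIBases$ is connected. I would establish this by a peak-reduction argument of Whitehead type: assign to each vertex a complexity, such as the total reduced length $\sum_i |\alpha(v_i)|$ of an automorphism $\alpha$ carrying the standard basis to that vertex, and show that unless the vertex is already adjacent to a named translate of the base vertex, some available $P_{ij}$ strictly decreases the complexity. The delicacy is that the reducing move must itself be palindromic: an ordinary length-shortening Whitehead automorphism need not lie in $\pia_\G$, so cancellation has to be arranged symmetrically through the two-sided conjugations $P_{ij}$. Controlling this cancellation in the presence of the defining commutation relations of $\raag$, and checking that whenever a reduction is combinatorially forced the corresponding $P_{ij}$ is genuinely well defined (its domination hypothesis holds), is the technical heart of the argument.

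Assembling the pieces, connectivity of $\PartialIBases$ together with the identification of the vertex stabiliser and of the edge-realising moves shows that $\pia_\G$ is generated by the diagram automorphisms, the inversions $\iota_j$, and the well-defined $P_{ij}$, as claimed.
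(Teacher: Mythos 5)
Your strategy (a Day--Putman style action on a palindromic partial basis complex) is genuinely different from the paper's, which never uses such a complex: the paper proves the splitting $\piaG \cong \ppiaG \rtimes \diag$ (Corollary~\ref{piaG}) and then finitely generates $\ppiaG$ by adapting Laurence's induction. But as written your argument has a genuine gap: all of its weight rests on the connectivity of $\PartialIBases$, which you assert rather than prove, and that assertion is essentially a restatement of the theorem itself. The vertices of your complex are, by construction, the $\piaG$-translates of the standard basis, so the claim ``every vertex can be joined to the base vertex by edges realised by the $P_{ij}$, modulo the vertex stabiliser'' is precisely the claim that diagram automorphisms, inversions and the well-defined $P_{ij}$ generate $\piaG$. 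The orbit--stabiliser formalism therefore contributes nothing until the reduction argument is actually carried out; likewise, your identification of the edges at the base vertex with single $P_{ij}$-moves is itself a substantive claim that would need proof, not a definition one can impose without then re-verifying connectivity.

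Moreover, the reduction you sketch --- that some well-defined $P_{ij}$ strictly decreases $\sum_i |\alpha(v_i)|$ unless $\alpha$ is essentially standard --- is exactly the point where the RAAG setting departs from the free group, and it is false as a purely length-theoretic statement. Precomposing $\alpha$ with $P_{ij}^{\pm 1}$ replaces $\alpha(v_i)$ by $\alpha(v_j)^{\mp 1}\alpha(v_i)\alpha(v_j)^{\mp 1}$ (up to signs), so a length drop requires control of $\alpha(v_j)$, not just of $\alpha(v_i)$. Consider $\alpha \in \ppiaG$ with $\alpha(v) = u^{r}\, p\, v\, p^{\rev}\, u^{r}$, where $u$ commutes with $v$ and with every letter of $p$, so that $\supp(\alpha(v))$ is disconnected in $\Gamma^c$: no palindromic move shortens this word unless one already knows $\alpha(u) = u^{\pm 1}$, which is a statement about rank and domination, not length. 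This is why the paper first reduces to \emph{simple} automorphisms in Lemma~\ref{simple}, choosing $v$ of maximal rank among non-simple vertices and invoking Laurence's Proposition~\ref{Laurence} and Servatius' Centraliser Theorem (Theorem~\ref{servcent}) to conclude $\alpha(w_i)=w_i^{\pm1}$ for the dominating letters; only afterwards does a Collins-style cancellation argument run, and even then only inside the free product $H = H_1 \ast \dots \ast H_t \ast \langle X_v \rangle$ of Lemma~\ref{palstab}, where ``more than half survives'' counting is valid --- in $\raag$ at large, shuffle-equivalence destroys such counting. Finally, note that an element of $\raag$ equal to its reverse need not be spellable as a literal palindrome, so even setting up your complexity coherently requires something like the clique-palindromic normal form of Proposition~\ref{cliqueform}. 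In short, your outline correctly locates the difficulty but defers it entirely, and the mechanism you propose for resolving it does not work without the rank-based machinery.
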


We also obtain a finite generating set for the centraliser $\CGi$, in Corollary~\ref{centgens}, by combining the generating set given by Theorem~\ref{palgens} with a short exact sequence involving $C_\G(\iota)$ and the pure palindromic automorphism group (see Theorem~\ref{exact}).  Our generating set for $\CGi$ consists of the generators of $\pia_\G$, along with all well-defined automorphisms of $\raag$ that map $v_i$ to $v_iv_j$ and fix every $v_k \in V \setminus \{ v_i \}$, for some $i \neq j$ with $[v_i,v_j] = 1$ in $\raag$.   

Further, for any re-indexing of the vertex set $V$ and each $k = 1,\dots, n$, we provide a finite generating set for the subgroup $\pia_\G(k)$ of $\pia_\G$ which fixes the vertices $v_1,\dots,v_k$, as recorded in Theorem~\ref{stabs}. The so-called \emph{partial basis complex} of $\raag$, which is an analogue of the curve complex, has as its vertices (conjugacy classes of) the images of members of~$V$ under automorphisms of $\Autraag$.  This complex has not, to our knowledge, appeared in the literature, but its definition is an easy generalisation of the free group version introduced by Day--Putman \cite{DP13} in order to generate the Torelli subgroup of $\Aut (F_n)$.  A `palindromic' partial basis complex was also used in~\cite{Ful15_torelli} to approach the study of palindromic automorphisms of $F_n$. Theorem~\ref{stabs} is thus a first step towards understanding stabilisers of simplices in the palindromic partial basis complex of $\raag$.

We prove Theorem~\ref{palgens} and our other finite generation results in Section~\ref{Palindromic}, using machinery developed by Laurence~\cite{Lau95} for his proof that $\Autraag$ is finitely generated.  The added constraint for us that our automorphisms be expressed as a product of \emph{palindromic} generators forces a more delicate treatment.  In addition, our proof uses Servatius' Centraliser Theorem~\cite{Ser89}, and a generalisation to $\raag$ of arguments used by Collins~\cite[Proposition 2.2]{Col95} to generate $\pia_n$.
Throughout this paper, we employ a decomposition into block matrices of the image of $\Autraag$ in $\gln$ under the canonical map induced by abelianising $A_\G$; this decomposition was observed by Day \cite{Day09_peak} and by Wade \cite{Wad13}.

We also in this work introduce the \emph{palindromic Torelli group $\ptor_\G$ of $\raag$}, which we define to consist of the palindromic automorphisms of $\raag$ that induce the identity automorphism on $H_1(\raag) = \Z^n$.  The group $\ptor_\G$ is the right-angled Artin group analogue of the hyperelliptic Torelli group $\mathcal{SI}_g$ of an oriented surface $S_g$, which has applications to Burau kernels of braid groups \cite{BMP15} and to the Torelli space quotient of the Teichm\"{u}ller space of $S_g$ \cite{Hai06}. Analogues of these objects exist for right-angled Artin groups (see, for example,~\cite{CSV12}), but are not yet well-developed.  We expect that the palindromic Torelli group will play a role in determining their structure. 

Even in the free group case, where $\ptor_\G$ is denoted by $\ptor_n$, little seems to be known about the palindromic Torelli group.  Collins~\cite{Col95} observed that $\ptor_n$ is non-trivial, and Jensen--McCammond--Meier~\cite[Corollary 6.3]{JMcCM07} proved that $\ptor_n$ is not homologically finite if $n~\geq~3$.  
An infinite generating set for $\ptor_n$ was obtained recently in \cite[Theorem~A]{Ful15_torelli}, and this is made up of so-called \emph{doubled commutator transvections} and \emph{separating $\pi$-twists}.  In Section~\ref{Torelli} we recall and then generalise the definitions of these two classes of free group automorphisms, to give two classes of palindromic automorphisms of a general $\raag$, which we refer to by the same names. As a first step towards understanding the structure of $\ptor_\G$, we obtain an explicit generating set as follows.
\begin{maintheorem}\label{ptorgens}The group $\ptor_\G$ is generated by the set of all well-defined doubled commutator transvections and separating $\pi$-twists in $\pia_\G$. \end{maintheorem}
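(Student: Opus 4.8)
The plan is to run the standard strategy for generating a Torelli-type group: present $\ptor_\G$ as the kernel of the homology action on $\pia_\G$, find a presentation of the image in $\gln$, and show that the lifts of its relators are precisely products of doubled commutator transvections and separating $\pi$-twists. By definition $\ptor_\G$ is the kernel of the map $\rho \colon \pia_\G \to \gln$ induced by abelianising $\raag$, so there is a short exact sequence $1 \to \ptor_\G \to \pia_\G \xrightarrow{\rho} L \to 1$ with $L = \rho(\pia_\G)$. The first step is to record the images under $\rho$ of the generators furnished by Theorem~\ref{palgens}: each inversion $\iota_j$ maps to the diagonal reflection with a single $-1$, each diagram automorphism maps to a signed permutation matrix, and each well-defined $P_{ij}$ maps to the doubled transvection $v_i \mapsto v_i + 2 v_j$, that is, a matrix of the form $I + 2E_{ij}$. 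In particular $L$ sits inside the level $2$ congruence subgroup $\Lambda_n[2]$, cut out by the block-matrix structure of $\rho(\Autraag)$ observed by Day~\cite{Day09_peak} and Wade~\cite{Wad13}.

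The key tool is the elementary lemma that if $G = \langle S \rangle$ surjects onto $Q = \langle \overline{S} \mid \overline{R}\rangle$, with $\overline{S}$ the image of $S$, then the kernel is generated \emph{as a normal subgroup} of $G$ by the lifts $\{\widetilde{r} : r \in \overline{R}\}$, each $\widetilde{r}$ obtained by reading the relator $r$ as a word in $S$. Applying this to $\rho$ reduces Theorem~\ref{ptorgens} to three tasks: (a) produce a presentation $\langle \overline{S} \mid \overline{R}\rangle$ of $L$ on the generators above; (b) check that each lifted relator $\widetilde{r} \in \ptor_\G$ is a product of doubled commutator transvections and separating $\pi$-twists; and (c) upgrade normal generation to honest generation.

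For (a) I would exploit that $L$ is a block-structured analogue of $\Lambda_n[2]$ and adapt a known presentation of $\Lambda_n[2]$ on the doubled elementary matrices and diagonal reflections, retaining only those relators compatible with the well-definedness constraints on the $P_{ij}$. I expect the relators to split into two natural families: commutator relations among the $I + 2E_{ij}$ (together with their interaction with the diagram automorphisms), and relations recording that each $\iota_j$ is an involution interacting appropriately with the remaining generators. For (b) the commutator-type relators should lift exactly to products of doubled commutator transvections, matching the way these automorphisms are constructed in Section~\ref{Torelli} as palindromic lifts of commutators of the $P_{ij}$, while the involution-type relators should lift to separating $\pi$-twists. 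This identification is the computational core of the argument.

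Finally, for (c) I would show that the collection of all well-defined doubled commutator transvections and separating $\pi$-twists is closed, up to products and inverses, under conjugation by the generators of $\pia_\G$: conjugating such an automorphism by an inversion, a diagram automorphism, or a $P_{ij}$ yields another of the same type, with indices permuted or shifted, so every $\pia_\G$-conjugate of a lifted relator already lies in the claimed (infinite) set, and normal generation becomes generation. The main obstacle will be step (b) together with pinning down the presentation in (a): since the $P_{ij}$ exist only when $\G$ meets the relevant connectivity conditions, the available relators, and hence the shape of their lifts, depend delicately on the graph, and one must verify that no relator lifts outside the two named families. Consistency with the free-group case of~\cite{Ful15_torelli}, where $\raag$ is free and $\ptor_n$ is generated by precisely these two families, provides a useful check.
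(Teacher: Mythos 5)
Your overall strategy is the same as the paper's: realise $\ptor_\G$ as the kernel of the homology representation, present the image by adapting the known presentation of $\Lambda_n[2]$ from \cite{Ful15_torelli}, lift the relators, and use conjugation-closure of the proposed generators to pass from normal generation to generation. However, two of your concrete claims are wrong, and the first blocks step (a) as you have set it up. You run the exact sequence with the full group $\pia_\G$ and assert that $L = \rho(\pia_\G)$ lies in $\Lambda_n[2]$. This is false whenever $\Gamma$ has a nontrivial symmetry: a nontrivial diagram automorphism maps to a non-identity permutation matrix, which is not congruent to the identity mod $2$. So $L$ is not a subgroup of $\Lambda_n[2]$ and cannot be presented by simply deleting relators from a presentation of $\Lambda_n[2]$. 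The paper avoids this by observing that $\ptor_\G$ automatically lies in the \emph{pure} palindromic group $\ppiaG$ (trivial action on $H_1$ forces each $\alpha(v_i)$ to be a palindrome with middle letter $v_i^{\pm 1}$), and running the sequence $1 \to \ptor_\G \to \ppiaG \to \Theta \to 1$, where $\Theta = \Phi(\ppiaG)$ genuinely is a subgroup of $\Lambda_n[2]$ and is presented in Theorem~\ref{raagcongpres}. (Your version could be repaired by presenting $L \cong \Theta \rtimes D_\Gamma$, but since $\pia_\G = \ppiaG \rtimes D_\Gamma$ splits, the extra relators lift to trivial automorphisms and one is back to the pure case anyway.)

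Second, your predicted relator-to-generator correspondence in step (b) is off, and this matters because it is exactly where the content of the theorem lies. In the actual presentation (Theorem~\ref{congpres}), every relator involving the involutions $Z_i$, namely $Z_i^2$, $[Z_i,Z_j]$, $(Z_iS_{ij})^2$, $(Z_jS_{ij})^2$ and $[Z_i,S_{jk}]$, lifts to the \emph{trivial} automorphism of $\raag$; involution relations produce no Torelli elements at all. The separating $\pi$-twists instead arise as lifts of the relator $(S_{ij}S_{ik}^{-1}S_{ki}S_{ji}S_{jk}S_{kj}^{-1})^2$, a word purely in the doubled transvections, while the doubled commutator transvections arise from $[S_{ji},S_{ki}]$ and $[S_{kj},S_{ji}]S_{ki}^{-2}$. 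A presentation consisting only of your two expected families (commutators among the doubled transvections plus involution relations) does not present $\Lambda_n[2]$; relators 9 and 10 of Theorem~\ref{congpres} are precisely the ones your anticipated shape omits, so the computation you defer would fail unless you import the full presentation of \cite{Ful15_torelli}. Finally, your step (c) is unnecessary: doubled commutator transvections and separating $\pi$-twists are \emph{defined} to be $\pia_\G$-conjugates of the basic automorphisms $\chi_1(i,j,k)$ and $\chi_2(i,j,k)$, so the generating set is conjugation-closed by fiat; your stronger claim that conjugating by a generator merely permutes or shifts indices is neither true nor needed.
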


The generating set we obtain in Theorem~\ref{ptorgens} compares favourably with the generators obtained in~\cite{Ful15_torelli} in the case that $\raag$ is free. Specifically, the generators given by Theorem~\ref{ptorgens} are the images in $\Autraag$ of those generators of $\ptor_n$ that descend to well-defined automorphisms of $\raag$ (viewing $\raag$ as a quotient of the free group $F_n$ on the set~$V$).

The proof of Theorem~\ref{ptorgens} in Section~\ref{Torelli} combines our results from Section~\ref{Palindromic} with results for~$\ptor_n$ from~\cite{Ful15_torelli}.  More precisely, as a key step towards the proof of Theorem~\ref{palgens}, we find a finite generating set for the pure palindromic subgroup of $\pia_\G$ (Theorem~\ref{ppiaGgens}).  We then use these generators to determine a finite presentation for the image $\Theta$ of this subgroup  under the canonical map $\Autraag \to \gln$ (Theorem~\ref{raagcongpres}).  In order to find this finite presentation for $\Theta \leq \gln$, we also need Corollary 1.1 from~\cite{Ful15_torelli}, which leverages the generating set for $\ptor_n$ from~\cite{Ful15_torelli} to obtain a finite presentation for the principal level 2 congruence subgroup $\Lambda_n[2] \leq \gln$.  Finally, using a standard argument, we lift the relators of $\Theta$ to obtain a normal generating set for $\ptor_\G$.

\textbf{Acknowledgements.} The authors would like to thank Tara Brendle and Stuart White, for encouraging their collaboration on this paper.  We also thank an anonymous referee for spotting a gap in the proof of Proposition~\ref{cliqueform} in an earlier version, and for other helpful comments.

\section{Preliminaries}\label{Prelims}

In this section we give definitions and some brief background on right-angled Artin groups and their automorphisms.   Throughout this section and the rest of the paper, we continue to use the notation introduced in Section~\ref{Intro}. We will also frequently use $v_i \in V$ to denote both a vertex of the graph $\Gamma$ and a generator of $\raag$, and when discussing a single generator we may omit the index $i$.  Section~\ref{Graph} recalls definitions related to the graph $\G$ and Section~\ref{combis} recalls some useful combinatorial results about words in the group $\raag$.  In Section~\ref{Autraag} we recall a finite generating set for $\Autraag$ and some important subgroups of $\Autraag$, and in Section~\ref{Blocks} we recall a matrix block decomposition for the image of $\Autraag$ in $\gln$.  

\subsection{Graph-theoretic notions}\label{Graph}

We briefly recall some graph-theoretic definitions, in particular the domination relation on vertices of $\Gamma$.

The \emph{link} of a vertex $v \in V$, denoted $\lk(v)$, consists of all vertices adjacent to $v$, and the \emph{star} of $v \in V$, denoted $\st(v)$, is defined to be $\lk(v) \cup \{v\}$. We define a relation $\leq$ on $V$, with $u \leq v$ if and only if $\lk (u) \subset \st (v)$. In this case, we say $v$ \emph{dominates} $u$, and refer to $\leq$ as the \emph{domination} relation \cite{KMLNR80}, \cite{Lau95}. Figure~\ref{dom} demonstrates the link of one vertex being contained in the star of another. Note that when $u \leq v$, the vertices $u$ and $v$ may be adjacent in $\Gamma$, but need not be. To distinguish these two cases, we will refer to \emph{adjacent} and \emph{non-adjacent} domination.
\begin{figure}[ht]
\begin{center}
\begin{overpic}[width=2in]{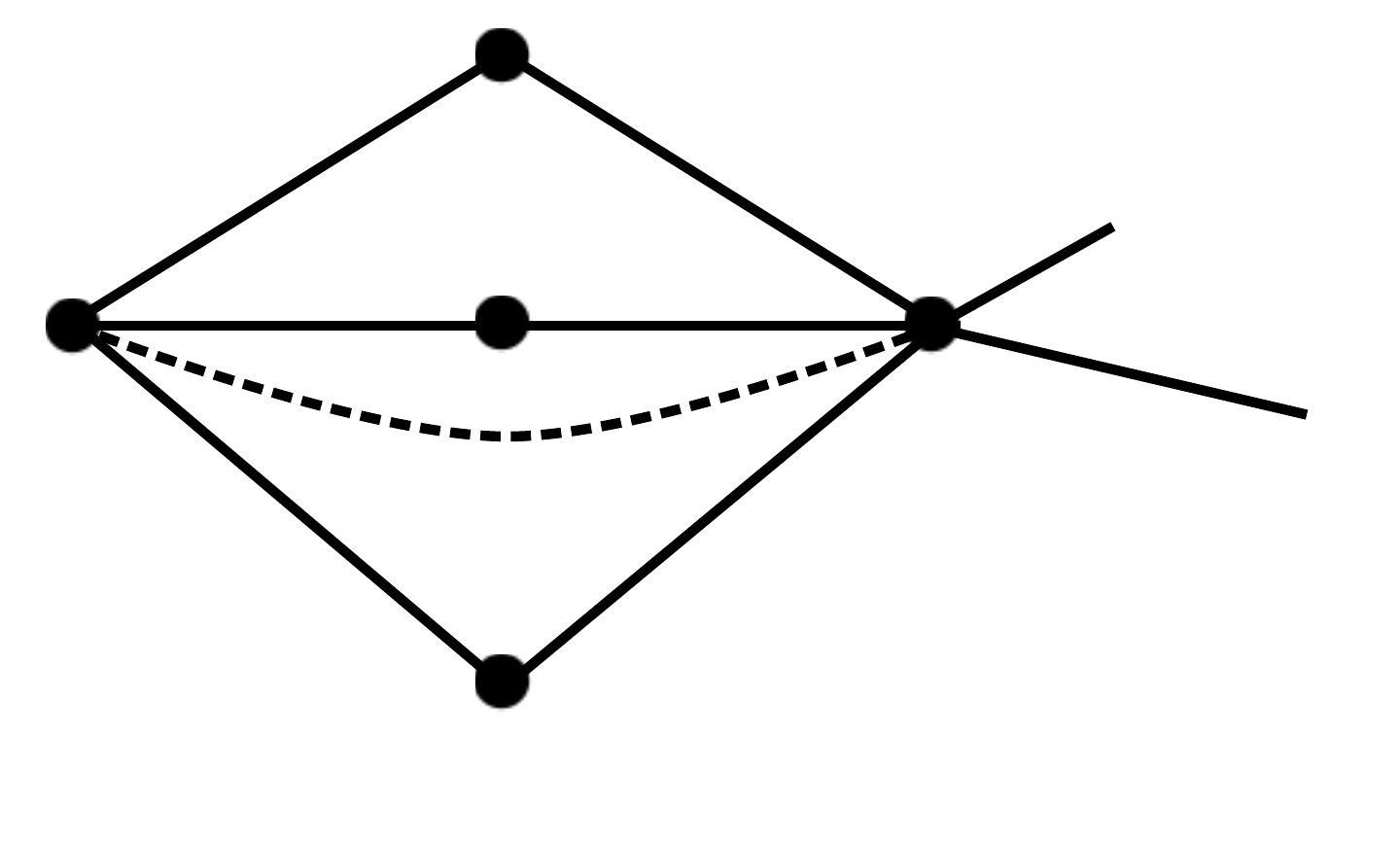}
\put(-3,35){$u$}
\put(64,41){$v$}
\end{overpic}
\caption{An example of a vertex $u$ being dominated by a vertex $v$. The dashed edge is meant to emphasise that $u$ and $v$ may be adjacent, but need not be.}
\label{dom}
\end{center}
\end{figure}

Domination in the graph $\Gamma$ may be used to define an equivalence relation $\sim$ on the vertex set $V$, as follows. We say $v_i \sim v_j$ if and only if $v_i \leq v_j$ and $v_j \leq v_i$, and write $[v_i]$ for the equivalence class of $v_i \in V$ under $\sim$. We also define an equivalence relation $\sim'$ by~$v_i \sim' v_j$ if and only if $[v_i] = [v_j]$ and $v_iv_j = v_jv_i$, writing $[v_i]'$ for the equivalence class of~$v_i \in V$ under~$\sim'$. We refer to $[v_i]$ as the \emph{domination class of $v_i$} and to $[v_i]'$ as the \emph{adjacent domination class of $v_i$}. Note that the vertices in $[v_i]$ necessarily span either an edgeless or a complete subgraph of $\Gamma$; in the former case, we will call $[v_i]$ a \emph{free domination class}, while in the latter, where $[v_i] = [v_i]'$, we will call $[v_i]$ an \emph{abelian domination class}. 

\subsection{Word combinatorics in right-angled Artin groups \label{combis}}

In this section we recall some useful properties of words on $V^{\pm1}$, which give us a measure of control over how we express group elements of $\raag$.  We include the statement of Servatius' Centraliser Theorem~\cite{Ser89} and of a useful proposition of Laurence from~\cite{Lau95}.

First, a word on $V^{\pm1}$ is \emph{reduced} if there is no shorter word representing the same element of~$\raag$. Unless otherwise stated, we shall always use reduced words when representing members of $\raag$.  Now let $w$ and $w'$ be words on $V^{\pm1}$. We say that $w$ and $w'$ are \emph{shuffle-equivalent} if we can obtain one from the other via repeatedly exchanging subwords of the form $uv$ for $vu$ when $u$ and $v$ are adjacent vertices in $\Gamma$. Hermiller--Meier \cite{HM95} proved that two reduced words $w$  and $w'$ are equal in $\raag$ if and only if $w$ and $w'$ are shuffle-equivalent, and also showed that any word can be made reduced by a sequence of these shuffles and cancellations of subwords of the form $u ^{\epsilon} u^{-\epsilon}$ ($u \in V$, $\epsilon \in \{ \pm 1\}$). This allows us to define the \emph{length} of a group element $w \in \raag$ to be the number of letters in a  reduced word representing~$w$, and the \emph{support} of $w \in \raag$, denoted $\supp(w)$, to be the set of vertices~$v \in V$ such that $v$ or $v^{-1}$ appears in a reduced word representing $w$. We say $w \in \raag$ is \emph{cyclically reduced} if it cannot be written in reduced form as $vw'v^{-1}$, for some $v \in V^{\pm1}$, $w' \in \raag$.

Servatius \cite[Section III]{Ser89} analysed centralisers of elements in arbitrary $\raag$, showing that the centraliser of any $w \in \raag$ is again a (well-defined) right-angled Artin group, say $A_\Delta$.  Laurence \cite{Lau95} defined the \emph{rank} of $w \in \raag$ to be the number of vertices in the graph $\Delta$ defining $A_\Delta$. We denote the rank of $w \in \raag$ by $\rk(w)$.

In order to state his theorem on centralisers in $\raag$, Servatius \cite{Ser89} introduced a canonical form for any cyclically reduced $w \in \raag$, which Laurence \cite{Lau95} calls a \emph{basic form} of $w$. For this, partition the support of $w$ into its connected components in $\Gamma^c$, the complement graph of $\Gamma$, writing \[ \supp(w) = V_1 \sqcup \dots \sqcup V_k, \] where each $V_i$ is such a connected component. Then we write \[ w = {w_1}^{r_1} \dots {w_k}^{r_k}, \] where each $r_i \in \Z$ and each $w_i \in \langle V_i \rangle$ is not a proper power in $\raag$ (that is, each $|r_i|$ is maximal). Note that by construction, $[w_i,w_j] = 1$ for $1 \leq i < j \leq k$. Thus the basic form of $w$ is unique up to permuting the order of the $w_i$, and shuffling within each $w_i$. With this terminology in place, we now state Servatius' `Centraliser Theorem' for later use.

\begin{theorem}[Servatius, \cite{Ser89}]\label{servcent} Let $w$ be a cyclically-reduced word on $V^{\pm1}$ representing an element of $\raag$. Writing $w = {w_1}^{r_1} \dots {w_k}^{r_k}$ in basic form, the centraliser of $w$ in $\raag$ is isomorphic to \[ \langle w_1 \rangle \times \dots \times \langle w_k \rangle \times \langle \lk (w) \rangle ,\] where $\lk (w)$ denotes the subset of $V$ of vertices which are adjacent to each vertex in $\supp (w)$. 
\end{theorem}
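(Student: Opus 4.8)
The plan is to prove the two inclusions separately, the containment $\supseteq$ together with the direct-product assertion being routine, and the containment $\subseteq$ requiring the real work. For the easy direction, first observe that the full subgraph of $\Gamma$ spanned by $\supp(w)\cup\lk(w)$ decomposes as a \emph{join}: since $V_1,\dots,V_k$ are distinct connected components of the complement graph $\Gamma^c$, every vertex of $V_i$ is adjacent in $\Gamma$ to every vertex of $V_j$ for $i\neq j$, and by definition every vertex of $\lk(w)$ is adjacent to every vertex of $\supp(w)$. Hence the subgroup $\langle\supp(w)\cup\lk(w)\rangle$ splits as the internal direct product $\langle V_1\rangle\times\cdots\times\langle V_k\rangle\times\langle\lk(w)\rangle$. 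Since $w_i\in\langle V_i\rangle$ and $\raag$ is torsion-free (so each $\langle w_i\rangle$ is infinite cyclic), the subgroup $\langle w_1\rangle\times\cdots\times\langle w_k\rangle\times\langle\lk(w)\rangle$ is a genuine internal direct product sitting inside $\raag$. Each factor centralises $w$: the $w_i$ commute with one another and with $w$ by construction, and each vertex of $\lk(w)$ commutes with every letter of $w$. This gives $\supseteq$.

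For the reverse inclusion, let $g$ centralise $w$. The first step is support containment: I would show $\supp(g)\subseteq\supp(w)\cup\lk(w)$. Using that $w$ is cyclically reduced and the Hermiller--Meier shuffle/cancellation normal form, one argues that a vertex of $\supp(g)$ lying outside $\supp(w)\cup\lk(w)$ cannot be reconciled between the reduced forms of $gw$ and $wg$, since such a letter can neither cancel against $w$ nor be shuffled past all of $\supp(w)$; this contradicts shuffle-equivalence of $gw$ and $wg$. Granting this, $g$ lies in $A_\Lambda := \langle\supp(w)\cup\lk(w)\rangle$, and I project it through the join decomposition above, writing $g=g_1\cdots g_k\,h$ uniquely with $g_i\in\langle V_i\rangle$ and $h\in\langle\lk(w)\rangle$. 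Comparing coordinates in $\langle V_1\rangle\times\cdots\times\langle V_k\rangle\times\langle\lk(w)\rangle$, the relation $gw=wg$ becomes the single-component conditions $g_iw_i^{r_i}=w_i^{r_i}g_i$ for each $i$, with $h$ left unconstrained. Thus everything reduces to computing, inside $\langle V_i\rangle$, the centraliser of the nontrivial power $w_i^{r_i}$.

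The crux---and the step I expect to be the main obstacle---is therefore the base case: inside the right-angled Artin group $\langle V_i\rangle$ whose defining graph has \emph{connected} complement (because $V_i$ is a single component of $\Gamma^c$), the centraliser of a cyclically reduced, full-support element that is not a proper power is exactly the infinite cyclic group it generates. Geometrically this says such an element is a ``rank-one''-type isometry of the associated cube complex with a unique axis; combinatorially, which is the route I would take to stay in keeping with the methods of Laurence and Servatius, one shows first that $\lk(w_i)=\emptyset$ in $\langle V_i\rangle$ when $|V_i|\geq 2$ (a connected graph on at least two vertices has no isolated vertex in its complement, so no vertex $v$ of $V_i$ is adjacent in $\Gamma$ to all of $V_i\setminus\{v\}$), the case $|V_i|=1$ being immediate. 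One then uses connectedness of the complement to forbid $\langle V_i\rangle$ from splitting as a nontrivial join, and carries out a careful normal-form analysis of $g_iw_i^{r_i}=w_i^{r_i}g_i$---cyclically reducing $g_i$, tracking how cyclic conjugation permutes the basic form, and using that $w_i$ is a root---to force $g_i\in\langle w_i\rangle$. Assembling the three families of factors recovers $\langle w_1\rangle\times\cdots\times\langle w_k\rangle\times\langle\lk(w)\rangle$, completing the proof. The delicate point throughout is the normal-form bookkeeping in the connected-complement case; the join decomposition and the reduction to it are comparatively formal.
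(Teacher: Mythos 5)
First, a remark on the comparison itself: the paper contains no proof of this statement. It is quoted as Servatius' Centraliser Theorem from~\cite{Ser89} and used as a black box, so your proposal must be judged as a standalone proof of Servatius' theorem. Its skeleton is the natural one, and the formal parts are correct: since the $V_i$ are the connected components of the induced subgraph $\Gamma^c[\supp(w)]$, the induced subgraph of $\Gamma$ on $\supp(w)\cup\lk(w)$ is a join, so $\langle\supp(w)\cup\lk(w)\rangle$ is the internal direct product $\langle V_1\rangle\times\dots\times\langle V_k\rangle\times\langle\lk(w)\rangle$; the inclusion $\supseteq$ follows; and, granting that the centraliser lies in this subgroup, the relation $gw=wg$ is indeed equivalent to the componentwise conditions $g_iw_i^{r_i}=w_i^{r_i}g_i$.

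The genuine gaps are exactly the two steps you yourself flag as ``the real work,'' and in both cases what you write names the goal rather than proving it. (1) Support containment: the justification that a letter $x$ of $g$ with $x\notin\supp(w)\cup\lk(w)$ ``can neither cancel against $w$ nor be shuffled past all of $\supp(w)$'' hides the difficulty. The witnesses you would need to shuffle $x$ past are letters of some $y\in\supp(w)$ with $[x,y]\neq 1$, and those $y$-letters of $w$ may themselves cancel against $y$-letters of $g$ in the products $gw$ and $wg$ (cancellation between $g$ and $w$ certainly occurs in general, e.g.\ for $g=w_1^{-r_1}$). Nor can this be repaired by exponent counts or by retracting $\raag$ onto the free group $\langle x,y\rangle$: a cyclically reduced $w$ can have zero exponent sum in every generator of its support (e.g.\ $w=yzy^{-1}z^{-1}$), so the retraction kills $w$ and yields no contradiction; and even when the exponent sum is nonzero, the retraction only constrains the \emph{image} of $g$, not $\supp(g)$, since $x$-letters of $g$ may cancel under the retraction. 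Making this step rigorous requires tracking the relative order of non-commuting letters through cancellation, which is already Servatius-level work. (2) The base case: the assertion that in $\langle V_i\rangle$ (whose defining graph has connected complement) the centraliser of the full-support, cyclically reduced element $w_i^{r_i}$ is exactly $\langle w_i\rangle$ \emph{is} the theorem in its essential case. Your ``careful normal-form analysis \dots\ to force $g_i\in\langle w_i\rangle$'' is precisely what must be supplied, and the two facts you do establish ($\lk(w_i)=\emptyset$ and that $\langle V_i\rangle$ admits no nontrivial join decomposition) are correct but do not begin that analysis. Note also a mismatch: you need the centraliser of the proper power $w_i^{r_i}$, not of the root $w_i$; identifying the two uses existence and uniqueness of roots in right-angled Artin groups, itself a nontrivial theorem. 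In short, you have correctly reduced Servatius' theorem to its hardest special case, but the proposal does not prove that case.
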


We will also make frequent use of the following result, due to Laurence~\cite{Lau95}, and so state it now for reference.

\begin{prop}[Proposition 3.5, Laurence \cite{Lau95}]\label{Laurence} Let $w \in \raag$ be cyclically reduced, and write $w = {w_1}^{r_1} \dots {w_k}^{r_k}$ in basic form, with $V_i := \supp(w_i)$. Then:
\begin{enumerate} \item $\rk(v) \geq \rk(w)$ for all $v \in \supp(w)$; and
\item if $\rk(v) = \rk(w)$ for some $v \in V_i$, then: \begin{enumerate} \item $v \leq u$ for all $u \in \supp(w)$; 
\item each $V_j$ is a singleton ($j \neq i$); and 
\item $v$ does not commute with any vertex of $V_i \setminus \{v \}$. \end{enumerate}
\end{enumerate}
\end{prop}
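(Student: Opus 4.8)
The plan is to turn the abstract notion of rank into an explicit count using Servatius' Centraliser Theorem, and then reduce both parts to a bookkeeping argument about links. First I would compute the two ranks appearing in the statement. Applying Theorem~\ref{servcent} to $w = {w_1}^{r_1}\cdots {w_k}^{r_k}$, the centraliser of $w$ is $\langle w_1\rangle\times\cdots\times\langle w_k\rangle\times\langle\lk(w)\rangle$; as a right-angled Artin group this is defined by the join of a complete graph on the $k$ (infinite cyclic) factors with the induced subgraph on $\lk(w)$, so $\rk(w)=k+|\lk(w)|$. Specialising to a single vertex $v$ (where $k=1$ and $\lk(w)=\lk(v)$) gives $\rk(v)=1+|\lk(v)|$. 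With these formulas, the inequality $\rk(v)\geq\rk(w)$ of part (1) becomes simply $|\lk(v)|\geq (k-1)+|\lk(w)|$.

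The combinatorial engine is the observation that for $i\neq j$, every vertex of $V_i$ is adjacent in $\Gamma$ to every vertex of $V_j$. This is immediate from the definition of the $V_i$ as the connected components of $\supp(w)$ in the complement graph $\Gamma^c$: distinct components are joined by no edge of $\Gamma^c$, and the absence of a $\Gamma^c$-edge between two vertices is exactly adjacency in $\Gamma$. Combining this with the facts that $\lk(w)\subseteq\lk(v)$ (since $v\in\supp(w)$) and $\lk(w)\cap\supp(w)=\varnothing$, I obtain, for $v\in V_i$, the disjoint inclusion
\[ \lk(w)\ \sqcup\ \bigcup_{j\neq i}V_j\ \subseteq\ \lk(v). \]
Since each $V_j$ is nonempty, counting yields $|\lk(v)|\geq|\lk(w)|+\sum_{j\neq i}|V_j|\geq|\lk(w)|+(k-1)$, which is precisely part (1).

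For part (2) I would analyse the equality case of this chain. If $\rk(v)=\rk(w)$, then both inequalities above collapse to equalities. Equality in $\sum_{j\neq i}|V_j|\geq k-1$ forces each $V_j$ with $j\neq i$ to be a singleton, which is (2b); and equality of the two finite sets forces $\lk(v)=\lk(w)\sqcup\bigcup_{j\neq i}V_j$ exactly. From this explicit description, (2c) is immediate: any $v'\in V_i\setminus\{v\}$ commuting with $v$ would lie in $\lk(v)$, yet $v'\in\supp(w)$ keeps it out of $\lk(w)$ and $v'\in V_i$ keeps it out of every $V_j$ with $j\neq i$, a contradiction. For (2a) I would check $\lk(v)\subseteq\st(u)$ for each $u\in\supp(w)$ by splitting over the two kinds of vertices of $\lk(v)$: a vertex of $\lk(w)$ is adjacent to every vertex of $\supp(w)$, hence to $u$; and a vertex in some $V_j$ ($j\neq i$) either equals $u$ (when $u\in V_j$) or is adjacent to $u$ by the cross-component adjacency observation.

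The genuinely delicate point is the translation of $\rk$ into $k+|\lk(w)|$ via Servatius' theorem, where one must confirm that each cyclic factor $\langle w_i\rangle$ contributes exactly one vertex to the defining graph of the centraliser while $\langle\lk(w)\rangle$ contributes $|\lk(w)|$ vertices (so that the count is a genuine isomorphism invariant). Once that is secured, the equality analysis of part (2) is the main content, and the case split in (2a) is where the cross-component adjacency fact must be used carefully; the remaining link manipulations are routine.
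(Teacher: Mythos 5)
Your proof is correct. Note first that the paper does not actually prove this statement: it is imported verbatim as Proposition~3.5 of Laurence \cite{Lau95}, so there is no internal argument to compare against; what you have produced is a self-contained proof, and it holds up. The key step, extracting the formula $\rk(w) = k + |\lk(w)|$ (and $\rk(v) = 1 + |\lk(v)|$) from Servatius' Centraliser Theorem, is sound: the centraliser $\langle w_1\rangle \times \dots \times \langle w_k\rangle \times \langle \lk(w)\rangle$ is the right-angled Artin group on the join of $k$ vertices with the induced subgraph on $\lk(w)$, and the vertex count of a defining graph is an isomorphism invariant because it equals the rank of the abelianisation --- this secures exactly the point you flag as delicate, and is consistent with the paper's remark that the centraliser is a ``(well-defined)'' right-angled Artin group. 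The combinatorial core --- that distinct components $V_i$, $V_j$ of $\supp(w)$ in $\Gamma^c$ are completely joined in $\Gamma$, giving the disjoint inclusion $\lk(w) \sqcup \bigcup_{j \neq i} V_j \subseteq \lk(v)$ --- correctly yields part (1) by counting, and your equality analysis delivers (2b), then the exact identification of $\lk(v)$, from which (2c) and the two-case verification of $\lk(v) \subseteq \st(u)$ for (2a) follow as you state (recalling the paper's convention that $v \leq u$ means $\lk(v) \subseteq \st(u)$). Edge cases ($k=1$, or $u$ equal to the singleton vertex in case 2 of (2a)) are handled by your case split. This is very likely close in spirit to Laurence's original argument, which also rests on Servatius' theorem, but as far as this paper is concerned your proof fills in a black box rather than duplicating one.
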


Recall that a \emph{clique} in a graph $\Gamma$ is a complete subgraph.  If $\Delta$ is a clique in $\Gamma$ then $A_\Delta$ is free abelian of rank equal to the number of vertices of $\Delta$, so any word supported on $\Delta$ can be written in only finitely many reduced ways.  The set of cliques in $\Delta$ is partially ordered by inclusion, giving rise to the notion of a maximal clique in a graph $\Gamma$.

\subsection{Automorphisms of right-angled Artin groups}\label{Autraag}

In this section we recall a finite generating set for $\Autraag$.  This generating set was obtained by Laurence \cite{Lau95}, confirming a conjecture of Servatius \cite{Ser89}, who had verified that the set generates $\Autraag$ in certain special cases. 

In the following list, the action of each generator of $\Autraag$ is given on $v \in V$, with the convention that if a vertex is omitted from discussion, it is fixed by the automorphism. There are four types of generators:
\begin{enumerate} 
\item \emph{Diagram automorphisms} $\phi$: each $\phi \in \Aut(\Gamma)$ induces an automorphism of $\raag$, which we also denote by $\phi$, mapping $v \in V$ to $\phi(v)$. 
\item \emph{Inversions} $\iota_j$: for each $v_j \in V$, $\iota_j$ maps $v_j$ to ${v_j}^{-1}$.
\item \emph{Dominated transvections} $\tau_{ij}$: for $v_i,v_j \in V$, whenever $v_i$ is dominated by $v_j$, there is an automorphism $\tau_{ij}$ mapping $v_i$ to $v_iv_j$.  We refer to a (well-defined) dominated transvection $\tau_{ij}$ as an \emph{adjacent transvection} if $[v_i,v_j]=1$; otherwise, we say $\tau_{ij}$ is a \emph{non-adjacent transvection}. 
\item \emph{Partial conjugations} $\gamma_{i,D}$: fix $v_i \in V$, and select a connected component $D$ of $\Gamma \setminus \st(v_i)$ (see Figure~\ref{starcut}). The partial conjugation $\gamma_{v_i,D}$ maps every $d \in D$ to $v_id{v_i}^{-1}$.
\end{enumerate}

\begin{figure}[ht]
\centering
\begin{overpic}[width=2.5in]{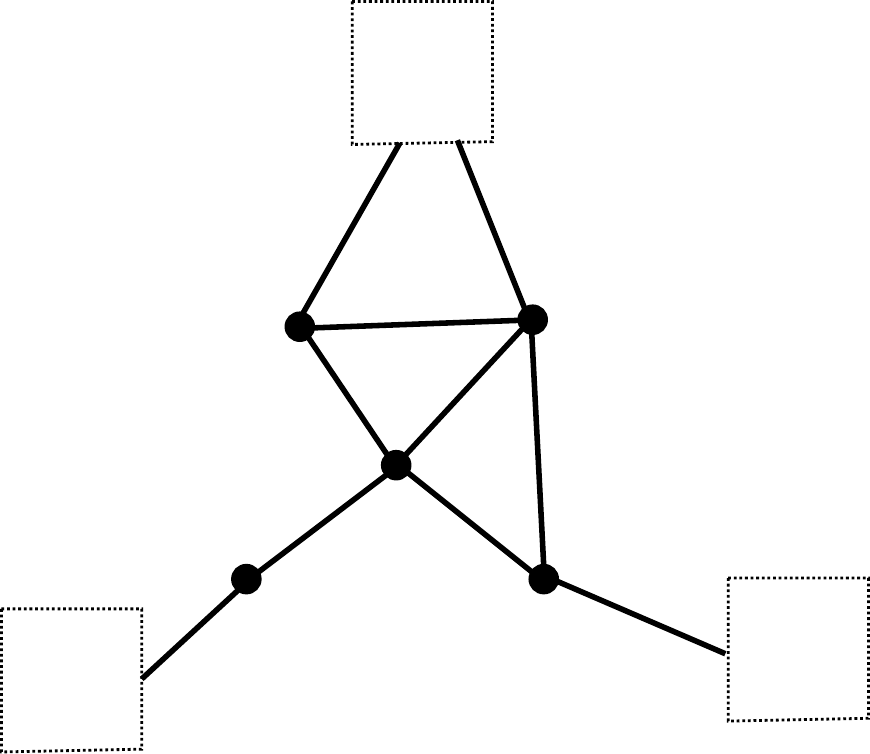}
\put(39,32){$v$}
\put(6,6){$D$}
\put(46,76){$D'$}
\put(88,10){$D''$}
\end{overpic}
\caption{When we remove the star of $v$, we leave three connected components $D$, $D'$ and $D''$.}
\label{starcut}
\end{figure}

We denote by $D_\G$, $I_\G$ and $\pc(\raag)$ the subgroups of $\Autraag$ generated by diagram automorphisms, inversions and partial conjugations, respectively, and by $\Aut^0(\raag)$ the subgroup of $\Autraag$ generated by all inversions, dominated transvections and partial conjugations.

\subsection{A matrix block decomposition}\label{Blocks}

Now we recall a useful decomposition into block matrices of an image of $\Autraag$ inside $\gln$.  This decomposition was observed by Day \cite{Day09_peak} and by Wade \cite{Wad13}.

Let $\Phi : \Autraag \to \gln$ be the canonical homomorphism induced by abelianising $\raag$.  Note that since $D_\Gamma$ normalises $\Aut^0(\raag)$, any $\phi \in \Autraag$ may be written (non-uniquely, in general), as $\phi = \delta \beta$, where $\delta \in D_\G$ and $\beta \in \Aut^0(\raag)$.

 By ordering the vertices of $\G$ appropriately, matrices in $\Phi(\Aut^0(\raag)) \leq \gln$ will have a particularly tractable lower block-triangular decomposition, which we now describe. The domination relation $\leq$ on $V$ descends to a partial order, also denoted $\leq$, on the set of domination classes $V / \sim$, which we (arbitrarily) extend to a total order, \[ [u_1] < \dots < [u_k] \] where $[u_i] \in V / \sim$. This total order may be lifted back up to $V$ by specifying an arbitrary total order on each domination class $[u_i] \in V / \sim$. We reindex the vertices of $\G$ if necessary so that the ordering $v_1, v_2, \dots, v_n$ is this specified total order on $V$. Let $n_i$ denote the size of the domination class $[u_i] \in V / \sim$. Under this ordering, any matrix $M \in \Phi(\Aut^0(\raag))$ has block decomposition:
\[ \begin{pmatrix} M_1 & 0 & 0 & \dots&0
\\ \ast & M_2 & 0 & \dots & 0 \\
\ast & \ast & M_3 & \dots & 0 \\
\vdots & \vdots & \vdots & \ddots & \vdots \\
\ast & \ast & \ast & \dots  & M_k
 \end{pmatrix}, \] where $M_i \in \GL(n_i, \Z)$ and the $(i,j)$ block $\ast$ ($j < i$) may only be non-zero if $u_j$ is dominated by $u_i$ in $\Gamma$. This triangular decomposition becomes apparent when the images of the generators of $\Aut^0(\raag)$ are considered inside $\gln$. The diagonal blocks may be any $M_i \in \GL(n_i, \Z)$, as by definition each domination class gives rise to all $n_i(n_i-1)$ transvections in $\GL(n_i, \Z)$, which, together with the appropriate inversions, generate $\GL(n_i, \Z)$. A diagonal block corresponding to a free domination class will also be called \emph{free}, and a diagonal block corresponding to an abelian domination class will be called \emph{abelian}.
 
This block decomposition descends to an analogous decomposition of the image of $\Aut^0(\raag)$ under the canonical map $\Phi_2$ to $\GL (n, \Z / 2)$, as this map factors through the homomorphism $\gln \to \GL (n, \Z / 2)$ that reduces matrix entries mod 2.

\section{Palindromic automorphisms}\label{Palindromic}

Our main goal in this section is to prove Theorem~\ref{palgens}, which gives a finite generating set for the group of palindromic automorphisms $\pia_\G$.  First of all, in Section~\ref{normalform}, we derive a normal form for group elements $\alpha(v) \in \raag$ where $v \in V$ and $\alpha$ lies in the centraliser~$\CGi$.  In Section~\ref{purepals} we introduce the pure palindromic automorphisms $\ppiaG$, and prove that~$\ppiaG$ is a group by showing that it is a kernel inside $\CGi$.  We then show that $\pia_\G$ is a group, and determine when the groups $\CGi$ and $\pia_\G$ are equal.  The proof of Theorem~\ref{palgens} is carried out in Section~\ref{proofthma}, where the main step is to find a finite generating set for $\ppiaG$.  We also provide finite generating sets for $\CGi$ and for certain stabiliser subgroups of $\pia_\G$.
 
\subsection{The centraliser $\mathbf{\CGi}$ and a clique-palindromic normal form \label{normalform}}

In this section we prove Proposition~\ref{cliqueform}, which provides a normal form for reduced words $w = u_1 \dots u_k$ ($u_i \in V^{\pm1}$) that are equal (in the group $\raag$) to their \emph{reverse}, \[ w^{\mathrm{rev}} := u_k \dots u_1. \]  We then in Corollary~\ref{blocks} derive implications for the diagonal blocks in the matrix decomposition discussed in Section~\ref{Blocks}.  The results of this section will be used in Section~\ref{purepals} below.

Green, in her thesis \cite{Gre90}, established a normal form for elements of $\raag$, by iterating an algorithm that takes a word $w_0$ on $V^{\pm1}$ and rewrites it as $ w_0 = p w_1$ in $\raag$, where $p$ is a word consisting of all the letters of $w_0$ that may be shuffled (as in Section~\ref{combis}) to be the initial letter of $w_0$, and $w_1$ is the word remaining after shuffling each of these letters into the initial segment $p$.  We now use a similar idea for palindromes.  

Let $\iota$ denote the automorphism of $\raag$ that inverts each $v \in V$. We refer to $\iota$ as the \emph{(preferred) hyperelliptic involution of $\raag$}. Denote by $C_\G(\iota)$ the centraliser in $\Autraag$ of $\iota$. Note that this centraliser is far from trivial: it contains all diagram automorphisms, inversions and adjacent transvections in $\Autraag$, and also contains all palindromic automorphisms.  The following proposition gives a normal form for the image of $v \in V$ under the action of some~$\alpha \in \CGi$.

\begin{prop}[Clique-palindromic normal form]\label{cliqueform}Let $\alpha \in C_\Gamma(\iota)$ and $v \in V$. Then we may write \[ \alpha(v) = w_1 \dots w_{k-1} w_k w_{k-1}\dots w_1 , \] where $w_i$ is a word supported on a clique in $\Gamma$ $(1 \leq i \leq k)$, and if $k \geq 3$ then $[w_i, w_{i+1}] \neq 1$ $(1 \leq i \leq k - 2)$. Moreover, this expression for $\alpha(v)$ is unique up to the finitely many rewritings of each word $w_i$ in $\raag$. 
\end{prop}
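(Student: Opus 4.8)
The plan is to reduce the statement to a purely combinatorial fact about reduced words that equal their reverse, and then to establish that fact by a greedy two-sided peeling argument built on the shuffle combinatorics of Section~\ref{combis}.

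\textbf{Step 1: reduction to palindromes.} The first point is that membership in $\CGi$ forces each $\alpha(v)$ to equal its own reverse in $\raag$. Writing $\alpha(v) = u_1 \cdots u_m$ as a reduced word on $V^{\pm1}$, the identity $\alpha \iota = \iota \alpha$ evaluated at $v$ gives $\alpha(v)^{-1} = \alpha(\iota(v)) = \iota(\alpha(v)) = u_1^{-1}\cdots u_m^{-1}$, since $\iota$ inverts every generator. As $u_1^{-1}\cdots u_m^{-1} = (u_m \cdots u_1)^{-1} = \big(\alpha(v)^{\mathrm{rev}}\big)^{-1}$, we conclude $\alpha(v) = \alpha(v)^{\mathrm{rev}}$ in $\raag$. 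It therefore suffices to prove that every reduced word $w$ with $w = w^{\mathrm{rev}}$ admits the claimed clique-palindromic decomposition. Conversely, any word of the stated form does satisfy $w = w^{\mathrm{rev}}$: each $w_i$ is supported on a clique, so its letters pairwise commute and $w_i^{\mathrm{rev}} = w_i$ in $\raag$, whence the whole expression is unchanged by reversal. This confirms that the target shape is the right ansatz.

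\textbf{Step 2: existence via two-sided peeling.} Given reduced $w = w^{\mathrm{rev}}$, I would construct the $w_i$ by induction on the length of $w$, peeling a maximal clique shell off both ends at once. Let $p$ be the maximal \emph{clique head} of $w$ in the sense of Green's algorithm \cite{Gre90}, namely the collection of all letters that may be shuffled to the front; these pairwise commute and are distinct vertices, so $p$ is supported on a clique and $p^{\mathrm{rev}} = p$ in $\raag$. Using $w = w^{\mathrm{rev}}$ together with $p^{\mathrm{rev}} = p$, the same clique can also be shuffled to the back. If the letter multiplicities of $w$ permit peeling all of $p$ from both ends simultaneously, I set $w_1 := p$, so that $w = w_1 w' w_1$ with $w'$ reduced and $w' = (w')^{\mathrm{rev}}$, and recurse on the strictly shorter $w'$. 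Otherwise the front clique cannot be peeled symmetrically, in which case $w$ itself is the central factor $w_k$ and the recursion terminates.

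\textbf{Step 3: the non-commuting conditions and uniqueness.} The conditions $[w_i, w_{i+1}] \neq 1$ for $1 \le i \le k-2$ should fall out of the maximality of each peeled head: if a non-central shell $w_i$ commuted with the next shell $w_{i+1}$ and their supports jointly spanned a clique, then the letters of $w_{i+1}$ could be shuffled forward into the head at stage $i$, contradicting the maximality of $w_i$ — unless the obstruction is a shortage of available copies of some letter, which by construction can occur only at the final peel adjacent to the centre, that is, for the pair $(w_{k-1}, w_k)$. This is precisely the pair the statement leaves unconstrained, which is why the hypothesis runs only to $i = k-2$. Uniqueness up to rewriting each $w_i$ then follows from the canonicity of the construction: the clique head $p$ is uniquely determined at each stage (as in the uniqueness of Green's normal form and the Hermiller--Meier characterisation of equality via shuffles \cite{HM95}), and so is the symmetric-peelability test, forcing the group elements $w_1, \dots, w_k$; the only remaining freedom is reordering the pairwise-commuting letters within each clique factor.

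I expect the main obstacle to lie in Steps~2 and~3, specifically in making the simultaneous two-sided peel rigorous and in proving that a commuting pair of consecutive shells can occur only adjacent to the centre. This demands careful bookkeeping of letter multiplicities alongside the shuffle combinatorics, and it is exactly this delicate point to which the non-commuting hypotheses are tailored. By contrast, the reduction in Step~1 and the verification that the stated form is reverse-symmetric are routine.
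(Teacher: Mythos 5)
Your Step 1 is correct and is exactly the paper's opening reduction. The genuine gap is in Step 2, at the very point you flag as delicate. You peel the \emph{full} Green clique head $p$ (all front-shuffleable letters), and when the symmetric peel is blocked you declare the whole word to be the central factor $w_k$. But failure of the symmetric peel does not imply that the word is supported on a clique, so this fallback can output an invalid decomposition. Concretely, let $\Gamma$ be the path with edges $\{a,z\}$ and $\{z,b\}$ only, and consider the reduced word $w = azba$, which equals its reverse in $\raag$ because $azba = abza$. (This word really occurs as some $\alpha(a)$ with $\alpha \in \CGi$: postcompose $P_{ab}\colon a \mapsto bab$ with the automorphism $a \mapsto bz$, $b \mapsto a$, $z \mapsto z$; both factors centralise $\iota$.) Here the clique head is $p = az$, but $z$ occurs only once in $w$, so $p$ cannot be peeled from both ends; your algorithm therefore stops and returns $w_k = azba$, whose support $\{a,z,b\}$ is not a clique. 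The same example refutes your Step 3 claim that a shortage of letters ``can occur only at the final peel adjacent to the centre'': it occurs here at the very first peel, on a non-clique word. The correct decomposition is $w = a\,(zb)\,a$.

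The idea you are missing is the one the paper's proof turns on: before peeling, form the set $Z_1$ of vertices of $\supp(w)$ that commute with \emph{every} vertex of $\supp(w)$, and let $w_1$ consist only of the front-shuffleable letters \emph{not} lying in $Z_1^{\pm 1}$. A single occurrence of a letter can be shuffled both to the front and to the back of $w$ precisely when its vertex lies in $Z_1$; so once $Z_1$ is excluded, the front and back copies of $w_1$ are forced to use disjoint occurrences of letters, the two-sided peel $w = w_1 A_2 w_1$ always succeeds, and the $Z_1$-letters are pushed inward, ending up in the central clique word $w_k$. In the example above this prescription gives exactly $w_1 = a$ and $w_2 = zb$. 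Maximality of this \emph{restricted} head is also what forces $[w_i, w_{i+1}] \neq 1$ for $i \leq k-2$: letters of $w_{i+1}$ never lie in $Z_i$, so if $w_{i+1}$ commuted with $w_i$ they would have been absorbed into $w_i$. Finally, note that ``canonicity of the construction'' shows only that \emph{your} algorithm has a well-defined output, not the proposition's uniqueness assertion: for instance $zabaz$ equals both $(za)\,b\,(za)$ (your output) and $a\,(zbz)\,a$ (the paper's output), both of the stated shape, so uniqueness must be understood relative to the paper's canonical procedure, which your construction does not reproduce.
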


We refer to this normal form as \emph{clique-palindromic} because the words under consideration, while equal to their reverses in the group $\raag$ as genuine palindromes are, need only be palindromic `up to cliques', as in the expression in the statement of the proposition.

\begin{proof}Suppose $\alpha \in C_\Gamma(\iota)$ and $v \in V$. Write $\alpha(v) = u_1 \dots u_r$ in reduced form, where each $u_i$ is in $V^{\pm1}$. Since $\alpha \iota (v) = \iota \alpha (v)$, we have that \begin{equation}\label{eq:CliqueNormal} u_1 \dots u_r = u_r \dots u_1 \end{equation} in $\raag$. If $\alpha(v)$ is supported on a clique, then there is nothing to show. Otherwise, put $A_1 = \alpha(v)$ and let $Z_1$ be the (possibly empty) subset of $V$ consisting of the vertices in $\supp(A_1)$ which commute with every vertex in $\supp(A_1)$.  We note that $Z_1$ is supported on a clique, and that $Z_1$ is, by assumption, a proper subset of $\supp(A_1)$.

We now rewrite $A_1 = u_1 \dots u_r$ as $w_1 {u_1}' \dots {u_s}'$, where ${u_j}' \in V^{\pm1}$ $(1 \leq j \leq s)$, and $w_1 \in \raag$ is the word consisting of all the $u_i$ which are not in $Z_1^{\pm 1}$ and which may be shuffled to the start of $u_1 \dots u_r$.  That is, $w_1$ consists of all letters $u_i \not \in Z_1^{\pm 1}$ so that if $i \geq 1$, the letter $u_i$ commutes with each of~$u_1,\dots,u_{i-1}$.  Notice that $w_1$ is nonempty since the first $u_i$ which is not in $Z_1$ will be in $w_1$.  By construction, $w_1$ is supported on a clique in $\G$.

Now any $u_i$ that may be shuffled to the start of $u_1 \dots u_r$ may also be shuffled to the end of $u_r \dots u_1$, by~\eqref{eq:CliqueNormal}.   Hence we may also rewrite $A_1$ as $u_1'' \dots u_s'' w_1$ for the same word $w_1$.  Since the support of $w_1$ is disjoint from $Z_1$, the letters of $A_1$ used in the copy of $w_1$ at the start of $w_1 {u_1}' \dots {u_s}'$ are disjoint from the letters of $A_1$ used in the copy of $w_1$ at the end of $u_1'' \dots u_s'' w_1$.  We thus obtain that \[ A_1 = \alpha(v) = w_1 {u_1}'' \dots {u_t}'' w_1 \] in $\raag$, with ${u_i}'' \in V^{\pm1}$. Since  $\alpha \iota (v) = \iota \alpha (v)$, it must be the case that ${u_1}'' \dots {u_t}'' = {u_t}'' \dots {u_1}''$ in $\raag$.

Now put $A_2 = {u_1}'' \dots {u_t}''$, so that $A_1 = w_1 A_2 w_1$.  Note that $\supp(A_2)$ contains $Z_1$.  If $A_2$ is supported on a clique, for example if $\supp(A_2) = Z_1$, then we put $w_2 = A_2$ and are done.  (In this case, $\supp(A_2) = Z_1$ if and only if $w_1$ and $w_2$ commute.)  If $A_2$ is not supported on a clique, we define $Z_2$ to be the vertices in $\supp(A_2)$ which commute with the entire support of $A_2$, and iterate the process described above.  Since each word $w_i$ constructed by this process is nonempty, the word $A_{i+1}$ is shorter than $A_i$, hence the process terminates after finitely many steps.  Notice also that $Z_1 \subseteq Z_2 \subseteq \dots \subseteq Z_i \subseteq \supp(A_{i+1})$, so any letters of $A_i$ which lie in $Z_i$ become part of the word $A_{i+1}$.  In particular, any letter of $A_1 = \alpha(v)$ which is in some $Z_i$, for example a letter in $Z(\raag)$, will end up in the word $w_k$ when the process terminates.

By construction, each $w_i$ is supported on a clique in $\G$.  Now the word $A_{i+1}$ is not supported on a clique if and only if a further iteration is needed, which occurs if and only if $i \leq k - 2$.  In this case, $Z_i$ must be a proper subset of $\supp(A_{i+1})$ and so $w_{i+1}$ does not commute with $w_i$ (the word $w_k$ may or may not commute with $w_{k-1}$).  Thus the expression obtained for $\alpha(v)$ when this process terminates is as in the statement of the proposition.  Moreover, this expression is unique up to rewriting each of the $w_i$, as they were defined in a canonical manner.  This completes the proof.
\end{proof} 

This normal form gives us the following corollary regarding the structure of diagonal blocks in the lower block-triangular decomposition of the image of $\alpha \in \CGi$ under the canonical map $\Phi: \Autraag \to \gln$, discussed in Section~\ref{Blocks}. Recall that $\Lambda_k[2]$ denotes the principal level 2 congruence subgroup of $\glk$.

\begin{corollary}\label{blocks}Write $\alpha \in \CGi$ as $\alpha = \delta \beta$, for some $\beta \in \Aut^0(\raag)$ and $\delta \in D_\Gamma$. Let $M$ be the matrix appearing in a diagonal block of rank $k$ in the lower block-triangular decomposition of $\Phi(\beta) \in \gln$. Then:

 \begin{enumerate} \item if the diagonal block is abelian, then $M$ may be any matrix in $\mathrm{GL}(k, \Z)$; and 
\item if the diagonal block is free then $M$ must lie in $\Lambda_k[2]$, up to permuting columns. \end{enumerate} 
\end{corollary}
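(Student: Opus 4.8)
The plan is to read the diagonal blocks of $\Phi(\beta)$ directly off the clique-palindromic normal form, isolating a parity phenomenon that is present exactly when the domination class is free. The first step is to transfer Proposition~\ref{cliqueform} from $\alpha$ to $\beta$. Although $\beta = \delta^{-1}\alpha$ need not centralise $\iota$ (non-adjacent transvections do not), the diagram automorphism $\delta$ is a graph automorphism, hence a homomorphism that permutes the generators, carries cliques to cliques, and commutes with reversal of words letter by letter. Applying $\delta^{-1}$ to $\alpha(v) = w_1 \cdots w_{k-1} w_k w_{k-1} \cdots w_1$ thus produces $\beta(v) = \delta^{-1}(w_1)\cdots \delta^{-1}(w_k)\cdots \delta^{-1}(w_1)$, which is again clique-palindromic, with each $\delta^{-1}(w_j)$ supported on a clique. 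So every $\beta(v)$, $v \in V$, is clique-palindromic.

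Next I would compute the diagonal block. Fix the class $[u_i]$ whose block is $M$, and recall that the entry of $M$ in row $v'$ and column $v$ (for $v, v' \in [u_i]$) is the exponent sum of $v'$ in $\beta(v)$. In the palindrome for $\beta(v)$, each outer word $\delta^{-1}(w_j)$ with $j < k$ occurs twice, contributing an even amount to every exponent sum; only the central word $\delta^{-1}(w_k)$ contributes an unpaired term. Hence, modulo $2$, the exponent sum of $v'$ in $\beta(v)$ equals the exponent of $v'$ in the single clique word $\delta^{-1}(w_k)$.

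The free case follows at once. When $[u_i]$ is free its vertices span an edgeless subgraph, so the clique $\supp(\delta^{-1}(w_k))$ contains at most one vertex of $[u_i]$; therefore the column of $M$ indexed by $v$ has at most one odd entry. Since $M \in \GL(k,\Z)$ has determinant $\pm 1$, its reduction mod $2$ lies in $\GL(k,\Z/2)$, and an invertible matrix over $\Z/2$ whose columns each have at most one nonzero entry must be a permutation matrix. Writing $M \equiv Q \pmod 2$ with $Q$ a permutation matrix gives $MQ^{-1} \in \Lambda_k[2]$, which is exactly the claim that $M$ lies in $\Lambda_k[2]$ up to permuting columns.

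In the abelian case the parity obstruction vanishes---$[u_i]$ is itself a clique, so $\delta^{-1}(w_k)$ may contain every vertex of $[u_i]$ with arbitrary exponents---and I would instead exhibit every $M \in \GL(k,\Z)$. All transvections $\tau_{ab}$ and inversions $\iota_a$ with $a,b \in [u_i]$ are adjacent transvections and inversions, each centralising $\iota$ and hence lying in $\CGi \cap \Aut^0(\raag)$; by the block decomposition of Section~\ref{Blocks} they generate all of $\GL(k,\Z)$ on this block while acting trivially off $[u_i]$, so taking $\alpha$ to be such a product (whence $\delta = 1$ and $\beta = \alpha$) realises an arbitrary $M$ with no off-diagonal interference. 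I expect the one delicate point to be the first step: confirming that passing from $\alpha$ to $\beta$ preserves the clique-palindromic form, and that a clique meets a free domination class in at most one vertex. After that, the parity count and the invertibility argument over $\Z/2$ are routine.
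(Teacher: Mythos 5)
Your proof is correct and takes essentially the same route as the paper: adjacent transvections and inversions inside $\CGi \cap \Aut^0(\raag)$ realise any matrix of $\GL(k,\Z)$ in an abelian block, while the clique-palindromic normal form forces at most one odd entry per column of a free block (a clique meets a free domination class in at most one vertex), after which mod-2 invertibility gives $M \in \Lambda_k[2]$ up to a column permutation. One minor remark: your caution that $\beta$ might not centralise $\iota$ is unfounded, since diagram automorphisms commute with $\iota$ and hence $\beta = \delta^{-1}\alpha \in \CGi$, so Proposition~\ref{cliqueform} applies to $\beta$ directly (this is exactly how the paper proceeds); your transfer of the normal form through $\delta^{-1}$ is a valid but unnecessary workaround.
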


\begin{proof}First, note that since $D_\Gamma \leq \CGi$, we must have that $\beta \in \CGi$. We deal with the abelian block case first. The group $\CGi \cap \Aut^0(\raag)$ contains all the adjacent transvections and inversions necessary to generate $\glk$ under $\Phi$, so the matrix $M$ in this diagonal block may be any member of $\glk$.

Now, suppose that the diagonal block is free. Suppose the column of $M$ corresponding to $v \in V$ contains two odd entries, in turn corresponding to vertices $u_1, u_2 \in [v]$, say. This implies that $\beta(v)$ has odd exponent sum of $u_1$ and of $u_2$. Use Proposition~\ref{cliqueform} to write \[ \beta(v) = w_1 \dots w_k \dots w_1 \] in normal form, with each $w_i \in \raag$ being supported on some clique in $\Gamma$. It must be the case that $w_k$ has odd exponent sum of $u_1$ and of $u_2$, since all other $w_i$ ($i \neq k$) appear twice in the normal form expression. Thus $u_1$ and $u_2$ commute. This contradicts the assumption that the diagonal block is free, so there must be precisely one odd entry in each column of~$M$. Hence up to permuting columns, we have $M \in \Lambda_k[2]$.
\end{proof}

\subsection{Pure palindromic automorphisms\label{purepals}}

In this section we introduce the pure palindromic automorphisms $\ppiaG$, which we will see form an important finite index subgroup of $\pia_\G$.  In Theorem~\ref{exact} we prove that $\ppiaG$ is a group, by showing that it is the kernel of the map from the centraliser $\CGi$ to $\GL(n,\Z/2)$ induced by mod 2 abelianisation.  Proposition~\ref{diagram_pure} then says that any element of $\pia_\G$ can be expressed as a product of an element of $\ppiaG$ with a diagram automorphism, and as Corollary~\ref{piaG} we obtain that the collection of palindromic automorphisms $\piaG$ is in fact a group. 
This section concludes by establishing a necessary and sufficient condition on the graph $\Gamma$ for the groups $\pia_\G$ and $\CGi$  to be equal, in Proposition~\ref{adj_dom}.

We define $\ppiaG \subset \piaG$ be the subset of palindromic automorphisms of $\raag$ such that for each $v \in V$, the word $\alpha(v)$ may be expressed as a palindrome whose middle letter is either $v$ or $v^{-1}$. For instance, $I_\G \subset \ppiaG$ but $D_\G \cap \ppiaG$ is trivial.  If $v_i \leq v_j$, there is a well-defined pure palindromic automorphism $P_{ij} := (\iota \tau_{ij})^2$, which sends $v_i$ to $v_j v_i v_j$ and fixes every other vertex in $V$. We refer to $P_{ij}$ as a \emph{dominated elementary palindromic automorphism of $\raag$}.

The following theorem shows that $\ppiaG$ is a group, by establishing that it is a kernel inside~$\CGi$. We will thus refer to $\ppiaG$ as the \emph{pure palindromic automorphism group of~$\raag$}.

\begin{theorem}\label{exact} There is an exact sequence \begin{equation} \label{seq} 1 \longrightarrow \ppiaG \longrightarrow \CGi \longrightarrow \GL(n, \Z / 2). \end{equation} Moreover, the image of $\CGi$ in $\GL(n, \Z / 2)$ is generated by the images of all diagram automorphisms and adjacent dominated transvections in $\Autraag$.\end{theorem}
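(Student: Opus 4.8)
The plan is to establish the exact sequence first and then analyse the image, since the second assertion refines our understanding of the cokernel of the map in \eqref{seq}.

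First I would define the map $\Psi \colon \CGi \to \GL(n,\Z/2)$ to be the mod $2$ reduction of the canonical homomorphism $\Phi \colon \Autraag \to \gln$, restricted to $\CGi$. Exactness at $\CGi$ amounts to identifying $\ker(\Psi)$ with $\ppiaG$. For the inclusion $\ppiaG \subseteq \ker(\Psi)$, I would take $\alpha \in \ppiaG$ and $v \in V$, and write $\alpha(v)$ as a palindrome whose middle letter is $v^{\pm 1}$; every other letter appears an even number of times by the palindromic symmetry, so the exponent sum of each $u \neq v$ in $\alpha(v)$ is even, while the exponent sum of $v$ itself is odd. Reading off the column of $\Phi(\alpha)$ corresponding to $v$, this says $\Phi(\alpha)$ is congruent to the identity mod $2$, so $\alpha \in \ker(\Psi)$. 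For the reverse inclusion, I would take $\alpha \in \ker(\Psi)$ and apply Proposition~\ref{cliqueform} to write $\alpha(v) = w_1 \dots w_{k-1} w_k w_{k-1} \dots w_1$ in clique-palindromic form. Since $\alpha \in \ker(\Psi)$, the column of $\Phi(\alpha)$ at $v$ has a single odd entry, and as in the proof of Corollary~\ref{blocks} the outer words $w_i$ ($i \neq k$) contribute even exponent sums, so the unique odd exponent sum occurs in $w_k$. One then argues that this forces $w_k$ to contain $v^{\pm 1}$ with odd exponent and everything else in $w_k$ with even exponent; because $w_k$ is supported on a clique (hence generates a free abelian group), we may shuffle $w_k$ into a genuine palindrome with middle letter $v^{\pm 1}$, and the surrounding $w_i \dots w_1$ on each side preserve the palindromic structure. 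This exhibits $\alpha(v)$ as a palindrome with middle letter $v^{\pm 1}$ for every $v$, so $\alpha \in \ppiaG$.

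For the second assertion, I would show the image $\Psi(\CGi)$ is generated by the images of the diagram automorphisms and adjacent dominated transvections. The diagram automorphisms and adjacent transvections lie in $\CGi$ (as noted before the statement of Proposition~\ref{cliqueform}), so their images certainly lie in $\Psi(\CGi)$; the content is the reverse, that these suffice. Here I would use the block decomposition of Section~\ref{Blocks} as it descends to $\GL(n,\Z/2)$ via $\Phi_2$. Writing an arbitrary $\alpha \in \CGi$ as $\alpha = \delta\beta$ with $\delta \in D_\G$ and $\beta \in \Aut^0(\raag)$, the image of $\delta$ is generated by diagram automorphisms, so it remains to handle $\Phi_2(\beta)$, which is lower block-triangular. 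By Corollary~\ref{blocks}, the free diagonal blocks of $\Phi(\beta)$ lie in $\Lambda_k[2]$ up to column permutation, hence reduce to permutation matrices mod $2$, which are realised by diagram automorphisms; the abelian diagonal blocks are arbitrary in $\GL(k,\Z/2)$ but are generated mod $2$ by the adjacent transvections within the abelian domination class. The off-diagonal entries, recording domination between distinct classes, are similarly realised by adjacent dominated transvections where the domination is adjacent, and I would need to check that any non-adjacent contributions to off-diagonal blocks vanish or are already accounted for mod $2$ by the palindromic (clique) constraint from Proposition~\ref{cliqueform}.

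The main obstacle I anticipate is the reverse inclusion $\ker(\Psi) \subseteq \ppiaG$: passing from the purely numerical mod $2$ condition to the \emph{word-level} statement that $\alpha(v)$ can be written as a palindrome with middle letter exactly $v^{\pm 1}$. The clique-palindromic form from Proposition~\ref{cliqueform} gives palindromic symmetry only up to the clique words $w_i$, so the delicate point is to promote the central clique word $w_k$ to a genuine palindrome centred at $v^{\pm 1}$ using shuffle-equivalence within the free abelian subgroup on its clique, and to confirm $v$ really lands in the support of $w_k$ rather than an outer $w_i$. A secondary difficulty in the second assertion is bookkeeping the off-diagonal blocks: ensuring that every entry forced by the domination relation can be realised \emph{adjacently} mod $2$, so that no non-adjacent transvection (which may fail to lie in $\CGi$) is needed.
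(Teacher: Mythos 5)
Your proposal is correct and follows essentially the same route as the paper: the mod-2 map $\Phi_2$ restricted to $\CGi$, the kernel identified with $\ppiaG$ via the clique-palindromic normal form of Proposition~\ref{cliqueform} (promoting the central clique word to a genuine palindrome centred at $v^{\pm 1}$, exactly as you describe), and the image handled through the block decomposition with diagram automorphisms and adjacent transvections. The off-diagonal check you flag closes precisely as you suspect: after normalising the diagonal blocks, an odd $(j,i)$ entry means $v_i$ and $v_j$ both have odd exponent sum in the image of $v_i$, so by Proposition~\ref{cliqueform} both must occur with odd exponent in the central clique word and hence commute, making the needed transvection $\tau_{ij}$ adjacent.
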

\begin{proof}Let $\Phi_2 : \Autraag \to \glnt$ be the map induced by the mod 2 abelianisation map $\raag \to (\Z / 2)^n$. We will show that $\ppia_\G$ is the kernel of the restriction of $\Phi_2$ to $\CGi$.

Let $\alpha \in \CGi$.  Note that for each $v \in V$, the element $\alpha(v)$ necessarily has odd length, since~$\alpha(v)$ must survive under the mod 2 abelianisation map $\raag \to (\Z / 2)^n$.   Now for each~$v\in~V$, write $\alpha(v)$ in clique-palindromic normal form $w_1 \dots w_k \dots w_1$, as in Proposition~\ref{cliqueform}.  Both the index $k$ and the word $w_k$ here depend upon $v$, so we write $w(v)$ for the central clique word in the clique-palindromic normal form for $\alpha(v)$.  Then each word~$w(v)$ is a palindrome of odd length which is supported on a clique in $\Gamma$.  It follows that the automorphism $\alpha$ lies in $\ppiaG$ if and only if for each $v \in V$, the exponent sum of $v$ in the word $w(v)$ is odd, and every other exponent sum is even.  Thus $\ppiaG$ is precisely the kernel of the restriction of~$\Phi_2$. 

We now derive the generating set for $\Phi_2(\CGi)$ in the statement of the theorem. Given $\alpha \in \CGi$, write $\alpha = \delta \beta$, where $\delta \in D_\G$ and $\beta \in \Aut^0(\raag)$. We map $\beta$ into $\GL(n, \Z / 2)$ using the canonical map $\Phi_2$, and give $\Phi_2(\beta)$ the lower block-triangular decomposition discussed in Section~\ref{Blocks}.

By Corollary~\ref{blocks}, we can reduce each diagonal block of $\Phi_2(\beta)$ to an identity matrix by composing $\Phi_2(\beta)$ with appropriate members of $\Phi_2(\CGi)$: permutation matrices (in the case of a free block), or images of adjacent transvections (in the case of an abelian block). The resulting matrix $N \in \Phi_2(\CGi)$ lifts to some $\alpha ' \in \CGi$. 

If $N$ has an off-diagonal 1 in its $i$th column, this corresponds to $\alpha'(v_i)$ having odd exponent sum of both $v_i$ and $v_j$, say. Writing $\alpha'(v_i)$ in clique-palindromic normal form $w_1 \dots w_k \dots w_1$, we must have that $v_i$ and $v_j$ both have odd exponent sum in $w_k$, and hence commute, by Proposition~\ref{cliqueform}. The presence of the 1 in the $(j,i)$ entry of $N$ implies that $v_i \leq v_j$, and so we can use the image of the (adjacent) transvection $\tau_{ij}$ to clear it.

Thus we conclude that $\Phi_2(\beta)$ may be written as a product of images of diagram automorphisms and adjacent transvections. Hence $\Phi_2(\CGi)$ is also generated by these automorphisms.
\end{proof}

We now use Theorem~\ref{exact} to prove that the collection of palindromic automorphisms $\piaG$ is a subgroup of $\Autraag$. We will require the following result.

\begin{prop}\label{diagram_pure} Let $\alpha \in \Autraag$ be palindromic.  Then $\alpha$ can be expressed as $\alpha = \delta \gamma$ where $\gamma \in \ppiaG$ and $\delta \in \diag$.
\end{prop}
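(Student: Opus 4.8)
The plan is to show that $\Phi_2(\alpha)$ lies in $\Phi_2(\diag)$, i.e.\ that it is the image of a diagram automorphism. Granting this, choose $\delta \in \diag$ with $\Phi_2(\delta) = \Phi_2(\alpha)$ and set $\gamma := \delta^{-1}\alpha$. Any palindromic automorphism centralises $\iota$: if $\alpha(v) = u_1\cdots u_k$ with $u_i = u_{k+1-i}$, then $\iota\alpha(v) = u_1^{-1}\cdots u_k^{-1} = u_k^{-1}\cdots u_1^{-1} = \alpha\iota(v)$, so $\alpha \in \CGi$; also $\delta \in \diag \subseteq \CGi$, whence $\gamma \in \CGi$. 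By construction $\Phi_2(\gamma) = I$, so Theorem~\ref{exact} gives $\gamma \in \ker(\Phi_2|_{\CGi}) = \ppiaG$, and since $\alpha = \delta\gamma$ the proposition follows. It therefore remains to establish two facts: (i) $\Phi_2(\alpha)$ is a permutation matrix, and (ii) its underlying permutation is realised by a graph automorphism.

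For (i), fix $v \in V$ and write $\alpha(v)$ as a genuine palindrome $u_1 \cdots u_k$. Under the mod $2$ abelianisation $\raag \to (\Z/2)^n$ the letters pair off as $u_i \leftrightarrow u_{k+1-i}$, and each such pair contributes $0$; hence the image of $\alpha(v)$ is $0$ when $k$ is even and is the single basis vector determined by the middle letter when $k$ is odd. Since $\Phi_2(\alpha) \in \glnt$ is invertible, no column can vanish, so every column of $\Phi_2(\alpha)$ is a standard basis vector, and invertibility forces these to be distinct. Thus $\Phi_2(\alpha)$ is a permutation matrix $P_\sigma$.

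For (ii), I would avoid a direct rank-theoretic argument and instead use the block decomposition of Section~\ref{Blocks}. Write $\alpha = \delta_0 \beta_0$ with $\delta_0 \in \diag$ and $\beta_0 \in \Aut^0(\raag)$. Then $\Phi_2(\beta_0) = \Phi_2(\delta_0)^{-1}P_\sigma$ is itself a permutation matrix, and it is lower block-triangular with respect to the domination-class ordering. An elementary induction on the topmost block-row (whose nonzero entries are confined to the first block-column, forcing the off-diagonal blocks beneath it to vanish) shows that a lower block-triangular permutation matrix is in fact block-diagonal, with each diagonal block a permutation matrix. Hence $\Phi_2(\beta_0) = P_\rho$, where $\rho$ permutes the vertices within each domination class. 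The main point to verify -- and the one requiring the most care -- is that any permutation preserving each domination class is a graph automorphism: the vertices of a free domination class all share a common link and those of an abelian class a common star, so permuting within a class preserves every adjacency and non-adjacency. Thus $\rho \in \Aut(\G)$, so $P_\rho = \Phi_2(\delta_1)$ for a diagram automorphism $\delta_1$, and $P_\sigma = \Phi_2(\delta_0)\Phi_2(\delta_1) = \Phi_2(\delta_0\delta_1)$. Taking $\delta = \delta_0\delta_1 \in \diag$ establishes (ii) and completes the proof.
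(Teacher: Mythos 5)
Your proof is correct, but it takes a genuinely different route from the paper's. The paper also starts from the middle-letter permutation (your step (i) is essentially its well-definedness and bijectivity argument, phrased via mod-2 images rather than reduced words and shuffle-equivalence), but it proves adjacency-preservation by applying Servatius' Centraliser Theorem (Theorem~\ref{servcent}): for adjacent $u,v$ the palindromes $\alpha(u)$ and $\alpha(v)$ commute, and an analysis of the basic form of $\alpha(u)$ forces the middle letter of $\alpha(v)$ to lie in $\lk(\alpha(u))$. You avoid centraliser theory and basic forms altogether: factoring $\alpha = \delta_0\beta_0$ with $\beta_0 \in \Aut^0(\raag)$ and invoking the lower block-triangular decomposition of Section~\ref{Blocks}, you force the residual permutation $\rho$ to preserve each domination class, and then use the elementary fact that vertices of a free class share a common link while those of an abelian class share a common star. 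You also finish differently: the paper simply observes that $\gamma = \delta^{-1}\alpha$ is pure palindromic (its middle letters become $v^{\pm 1}$), whereas you route through the kernel characterisation of Theorem~\ref{exact}; this is legitimate, since Theorem~\ref{exact} precedes the proposition, and your verification that palindromic automorphisms lie in $\CGi$ is sound. As for what each approach buys: the paper's argument works directly with group elements and needs no choice of vertex ordering or block machinery, while yours is more elementary (linear algebra over $\Z/2$ plus graph combinatorics) and yields the extra structural observation that, after adjusting by a diagram automorphism, the middle-letter permutation preserves every domination class. One small point: your one-line justification of the key graph-theoretic claim is correct but compressed --- for adjacent $u,w$ in \emph{different} classes one must apply the common-link/common-star property twice (first replacing $u$ by $\rho(u)$, then $w$ by $\rho(w)$, noting each time that the two vertices involved are distinct), and for adjacent pairs \emph{within} a class one uses that abelian classes are cliques; spelling this out would make the step airtight.
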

\begin{proof} Let $\alpha \in \piaG$. Define a function $\delta:V \to V$ by letting $\delta(v)$ be the middle letter of a reduced palindromic word representing $\alpha(v)$. Note that $\delta$ is well-defined, because all reduced expressions for $\alpha(v)$ are shuffle-equivalent, and in any such reduced expression there is exactly one letter with odd exponent sum. The map $\delta$ must be bijective, otherwise the image of $\alpha$ in $\GL(n, \Z / 2)$ would have two identical columns. We now show that $\delta$ induces a diagram automorphism of $\raag$, which by abuse of notation we also denote $\delta$. 

Since $\delta : V \to V$ is a bijection and $\Gamma$ is simplicial, it suffices to show that $\delta$ induces a graph endomorphism of $\Gamma$. Suppose that $u, v \in V$ are joined by an edge in $\Gamma$. Then $[\alpha(v),\alpha(u)]=1$, and so we apply Servatius' Centraliser Theorem (Theorem~\ref{servcent}). Write $\alpha(u)$ in basic form ${w_1}^{r_1} \dots {w_s}^{r_s}$ (see Section~\ref{combis}). Since $\alpha(u)$ is a palindrome, all but one of these $w_i$ will be an even length palindrome, and exactly one will be an odd length palindrome, with odd exponent sum of $\delta(u)$. We know by the Centraliser Theorem that $\alpha(v)$ lies in
\[ \langle w_1 \rangle \times \dots \times \langle w_s \rangle \times \langle \lk(\alpha(u)) \rangle .\] Since $\delta(v) \neq \delta(u)$, the only way $\alpha(v)$ can have an odd exponent of $\delta(v)$ is if $\delta(v) \in \lk(\alpha(u))$. In particular, $[\delta(v),\delta(u)] = 1$. Thus $\delta$ preserves adjacency in $\Gamma$ and hence induces a diagram automorphism.

The proposition now follows, setting $\gamma =  \delta^{-1} \alpha \in \ppiaG$.
\end{proof}

The following corollary is immediate.

\begin{corollary} \label{piaG}  The set $\piaG$ forms a group.  Moreover, this group splits as $\ppiaG \rtimes \diag$. \end{corollary}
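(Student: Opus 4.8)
The plan is to assemble the semidirect product from the pieces already established, the essential input being Proposition~\ref{diagram_pure}; the corollary is labelled immediate precisely because the remaining work is bookkeeping together with the standard recognition criterion for an internal semidirect product. First I would record that $\piaG \subseteq \CGi$, since every palindromic automorphism commutes with $\iota$ (as noted following the definition of $\CGi$). This places all the relevant automorphisms inside the centraliser, where Theorem~\ref{exact} applies: by that theorem $\ppiaG$ is the kernel of the restriction of $\Phi_2 : \Autraag \to \glnt$ to $\CGi$, and is in particular normal in $\CGi$.

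Next I would observe that $\diag \subseteq \CGi$, since diagram automorphisms commute with $\iota$, so $\diag$ normalises $\ppiaG$. By the standard fact that the product of a subgroup with a subgroup it normalises is again a subgroup, the set product $\ppiaG \cdot \diag$ is then a subgroup of $\CGi$, hence of $\Autraag$.

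The heart of the argument is to identify this subgroup with $\piaG$. Proposition~\ref{diagram_pure} supplies one inclusion: every $\alpha \in \piaG$ factors as $\alpha = \delta \gamma$ with $\delta \in \diag$ and $\gamma \in \ppiaG$, so $\piaG \subseteq \diag \cdot \ppiaG = \ppiaG \cdot \diag$, the last equality using normality of $\ppiaG$. For the reverse inclusion, I would note that both $\ppiaG$ and $\diag$ consist of palindromic automorphisms (a diagram automorphism sends each $v$ to a single vertex $\delta(v)$, a length-one palindrome, while $\ppiaG \subseteq \piaG$ by definition), and that $\piaG$ is closed under composition; hence $\ppiaG \cdot \diag \subseteq \piaG$. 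Therefore $\piaG = \ppiaG \cdot \diag$, and in particular $\piaG$ is a group.

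Finally, for the decomposition I already have $\ppiaG \trianglelefteq \CGi$, hence $\ppiaG \trianglelefteq \piaG$, and $\diag \cap \ppiaG$ is trivial (a nontrivial diagram automorphism moves some vertex off itself, whereas a pure palindromic automorphism has middle letter $v$ or $v^{-1}$ for each $v$, so fixes every domination class setwise in the relevant sense). Together with $\piaG = \ppiaG \cdot \diag$, these are exactly the hypotheses guaranteeing an internal semidirect product $\piaG = \ppiaG \rtimes \diag$. I expect the only genuine subtlety to be ensuring $\piaG \subseteq \CGi$ before invoking normality of $\ppiaG$, and confirming that both factors consist of palindromes for the reverse inclusion; the rest is the subgroup-product criterion and the semidirect product recognition theorem applied verbatim.
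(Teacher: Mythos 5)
Your proposal is correct and follows essentially the same route as the paper, which declares the corollary immediate from Proposition~\ref{diagram_pure} (giving $\piaG = \diag \cdot \ppiaG$) together with Theorem~\ref{exact} (giving normality of $\ppiaG$ in $\CGi$) and the observation, already made when $\ppiaG$ was defined, that $\diag \cap \ppiaG$ is trivial. You have simply spelled out the bookkeeping the paper leaves implicit; the only stylistic quibble is that the triviality of $\diag \cap \ppiaG$ is most cleanly seen either from the middle-letter/exponent-sum argument you sketch or directly from the fact that a nontrivial diagram automorphism has nontrivial image under $\Phi_2$ while $\ppiaG = \Ker(\Phi_2|_{\CGi})$.
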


We are now able to determine precisely when the groups $\piaG$ and $\CGi$ appearing in the exact sequence \eqref{seq} in the statement of Theorem~\ref{exact} are equal.

\begin{prop}\label{adj_dom} The groups $\piaG$ and $\CGi$ are equal if and only if $\Gamma$ has no adjacent domination classes. \end{prop}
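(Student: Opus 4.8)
The plan is to reduce the asserted equality to a statement about images in $\glnt$ under the mod~$2$ abelianisation map $\Phi_2$, using that $\ppiaG = \ker(\Phi_2|_{\CGi})$ by Theorem~\ref{exact}. Since every palindromic automorphism centralises $\iota$, we have the chain $\ppiaG \leq \piaG \leq \CGi$, with $\ppiaG$ normal in $\CGi$. The correspondence theorem then gives that $\piaG = \CGi$ if and only if $\Phi_2(\piaG) = \Phi_2(\CGi)$, so it suffices to analyse these two images.

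First I would compute each image. By Corollary~\ref{piaG} we have $\piaG = \ppiaG \rtimes \diag$, and since $\Phi_2(\ppiaG)$ is trivial this yields $\Phi_2(\piaG) = \Phi_2(\diag)$. As each diagram automorphism merely permutes $V$, the group $\Phi_2(\diag)$ consists entirely of permutation matrices. On the other hand, Theorem~\ref{exact} identifies $\Phi_2(\CGi)$ as the subgroup of $\glnt$ generated by the images of all diagram automorphisms together with the images of all well-defined adjacent dominated transvections $\tau_{ij}$.

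It then remains to show that these two subgroups coincide precisely when $\Gamma$ has no adjacent domination classes, which is the combinatorial condition asserting that no well-defined adjacent transvection $\tau_{ij}$ (with $i \neq j$) exists. If there is no such transvection, the generating set for $\Phi_2(\CGi)$ furnished by Theorem~\ref{exact} reduces to the images of diagram automorphisms alone, so $\Phi_2(\CGi) = \Phi_2(\diag) = \Phi_2(\piaG)$ and the groups are equal. Conversely, if $v_i \leq v_j$ with $i \neq j$ and $[v_i,v_j] = 1$, then $\tau_{ij} \in \CGi$ is a well-defined adjacent transvection and $\Phi_2(\tau_{ij})$ is the elementary matrix sending $e_i \mapsto e_i + e_j$ and fixing the remaining basis vectors. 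This matrix has a column containing two nonzero entries, so it is not a permutation matrix and therefore cannot lie in $\Phi_2(\diag) = \Phi_2(\piaG)$. Hence $\Phi_2(\piaG)$ is a proper subgroup of $\Phi_2(\CGi)$, giving $\piaG \neq \CGi$.

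I expect the main obstacle to be purely organisational: verifying that $\ppiaG$ really does sit inside both $\piaG$ and $\CGi$ as a normal subgroup so that the reduction to images under $\Phi_2$ is legitimate, and confirming that the semidirect decomposition of $\piaG$ collapses under $\Phi_2$ to exactly the permutation subgroup $\Phi_2(\diag)$. Once this is in place, the permutation-matrix invariant cleanly separates the image of a single adjacent transvection from $\Phi_2(\piaG)$, and the combinatorial identification of ``no adjacent domination classes'' with the non-existence of adjacent transvections is immediate from the definitions.
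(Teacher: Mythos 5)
Your proof is correct, but it takes a genuinely different route from the paper's. You convert the equality $\piaG = \CGi$ into an equality of images in $\GL(n,\Z/2)$: since $\ppiaG = \ker(\Phi_2|_{\CGi})$ lies inside both groups, the correspondence theorem reduces everything to comparing $\Phi_2(\piaG)$ with $\Phi_2(\CGi)$, and you compute these using the splitting $\piaG = \ppiaG \rtimes \diag$ from Corollary~\ref{piaG} (so $\Phi_2(\piaG) = \Phi_2(\diag)$ consists of permutation matrices) and the second assertion of Theorem~\ref{exact} (so $\Phi_2(\CGi)$ is generated by images of diagram automorphisms and adjacent transvections). The paper argues directly with an element $\alpha \in \CGi \setminus \piaG$: writing $\alpha = \delta\beta$ with $\beta \in \Aut^0(\raag)$, it finds a vertex $v$ such that $\beta(v)$ has two letters $u_1,u_2$ of odd exponent sum, uses the clique-palindromic normal form (Proposition~\ref{cliqueform}) to see that $u_1$ and $u_2$ commute, and then runs a case analysis with the block decomposition and Corollary~\ref{blocks} to extract an adjacent domination relation. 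The mathematical core is the same --- the paper's case analysis essentially redoes the computation underlying the second half of Theorem~\ref{exact} --- but your packaging is more economical: the proposition becomes a formal consequence of Theorem~\ref{exact} and Corollary~\ref{piaG}, plus the observation that an elementary matrix mod 2 is not a permutation matrix. What the paper's hands-on argument buys instead is explicit information, locating the adjacent domination pair inside the support of $\beta(v)$ for the given non-palindromic $\alpha$, rather than deducing its existence abstractly. Note also that, like the paper, you read ``no adjacent domination classes'' as ``no well-defined adjacent transvections,'' i.e.\ no pair $v_i \neq v_j$ with $v_i \leq v_j$ and $[v_i,v_j]=1$; this is the reading the paper's own proof uses, so your combinatorial identification is consistent with it.
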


\begin{proof}  If $\G$ has an adjacent domination class, then the adjacent transvections to which it gives rise are in $\CGi$ but not in $\piaG$.

For the converse, suppose $\alpha \in \CGi \setminus \piaG$. Write $\alpha = \delta \beta$, where $\delta \in \diag$ and $\beta \in \Aut^0(\raag)$, as in the proof of Theorem~\ref{exact}. Note that since $\diag \leq \CGi$ we have that $\beta \in \CGi$.  There must be a $v \in V$ such that $\beta(v)$ has at least two letters of odd exponent sum, say $u_1$ and~$u_2$, as otherwise $\alpha$ would lie in $\piaG$. Recall that $u_1$ and $u_2$ must commute, as they both must appear in the central clique word of the clique-palindromic normal form of $\beta(v)$, in order to have odd exponent.

Consider $\Phi(\beta)$ in $\gln$ under our usual lower block-triangular matrix decomposition, discussed in Section~\ref{Blocks}. It must be the case that both $u_1$ and $u_2$ dominate $v$. This is because the odd entries in the column of $\Phi(\beta)$ corresponding to $v$ that arise due to $u_1$ and~$u_2$ either lie in the diagonal block containing $v$, or below this block. In the former case, this gives $u_1, u_2 \in [v]$, while in the latter, the presence of non-zero entries below the diagonal block of $v$ forces $u_1, u_2 \geq v$ (as discussed in Section~\ref{Blocks}). If $v$ dominates $u_1$, say, in return, then we obtain $u_1 \leq v \leq u_2$, and so by transitivity $u_1$ is (adjacently) dominated by $u_2$, proving the proposition in this case. 

Now consider the case that neither $u_1$ nor $u_2$ is dominated by $v$. By Corollary~\ref{blocks}, we may carry out some sequence of row operations to $\Phi(\beta)$ corresponding to the images of inversions, adjacent transvections, or $P_{ij}$ in $\Phi(\CGi)$, to reduce the diagonal block corresponding to~$[v]$ to the identity matrix. The resulting matrix lifts to some $\beta' \in \CGi$, such that $\beta'(v)$ has exponent sum 1 of $v$, and odd exponent sums of $u_1$ and of $u_2$. As we argued in the proof of Corollary~\ref{blocks}, this means $u_1, u_2$ and $v$ pairwise commute, and so $v$ is adjacently dominated by $u_1$ (and $u_2$). This completes the proof.
\end{proof}

\subsection{\label{proofthma}Finite generating sets}

In this section we prove Theorem~\ref{palgens} of the introduction, which gives a finite generating set for the palindromic automorphism group $\pia_\G$.  The main step is Theorem~\ref{ppiaGgens}, where we determine a finite set of generators for the pure palindromic automorphism group $\ppiaG$.   We also obtain finite generating sets for the centraliser $\CGi$ in Corollary~\ref{centgens}, and for certain stabiliser subgroups of $\pia_\G$ in Theorem~\ref{stabs}.

\begin{theorem}\label{ppiaGgens}The group $\ppiaG$ is generated by the finite set comprising the inversions and the dominated elementary palindromic automorphisms. \end{theorem}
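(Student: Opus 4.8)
The plan is to prove the two inclusions separately. The containment $\langle \iota_j, P_{ij} \rangle \subseteq \ppiaG$ is immediate: each inversion lies in $\ppiaG$ (its image of $v_j$ is the length-one palindrome $v_j^{-1}$), and each $P_{ij} = (\iota\tau_{ij})^2$ was constructed as a pure palindromic automorphism. For the reverse inclusion I would take an arbitrary $\alpha \in \ppiaG$ and carry it to the identity by successively right-composing with the proposed generators, arguing by induction on the total length $L(\alpha) := \sum_{v \in V} |\alpha(v)|$, where $|\cdot|$ denotes word length in $\raag$. This mirrors Laurence's peak-reduction scheme, the extra constraint being that every reducing move must itself be an inversion or a dominated elementary palindromic automorphism.

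First I normalise: right-composing $\alpha$ with those inversions $\iota_j$ for which $\alpha(v_j)$ has central letter $v_j^{-1}$ replaces $\alpha(v_j)$ by its inverse, so that afterwards every $\alpha(v)$ is a palindrome whose central letter is exactly $v$. For such a normalised $\alpha$ one has $L(\alpha) \geq |V|$, with equality precisely when every $\alpha(v) = v$, that is, $\alpha = \mathrm{id}$; this is the base case. Moreover, by the construction in Proposition~\ref{diagram_pure} the diagram part of $\alpha$ is the middle-letter bijection, which is now trivial, so $\alpha \in \Aut^0(\raag)$ and the block-triangular description of $\Phi(\alpha)$ from Section~\ref{Blocks} applies.

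The inductive step is the heart of the argument. Assuming $\alpha \neq \mathrm{id}$, I choose a vertex $v_i$ with $|\alpha(v_i)|$ maximal among all images and write $\alpha(v_i) = w_1 w_2 \cdots w_2 w_1$ in the clique-palindromic normal form of Proposition~\ref{cliqueform}, so that the outer layer $w_1$ is a nonempty word supported on a clique. Since $\alpha \in \Aut^0(\raag)$, the block structure forces every vertex of $\supp(\alpha(v_i))$, and in particular every vertex $v_j$ occurring in $w_1$, to dominate $v_i$; hence each corresponding $P_{ij}$ is well-defined and available. Now right-composing $\alpha$ with $P_{ij}^{-1}$ leaves all images fixed except that it replaces $\alpha(v_i)$ by $\alpha(v_j)^{-1}\alpha(v_i)\alpha(v_j)^{-1}$, which is again a palindrome centred at $v_i$. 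Choosing $v_j$ in $w_1$ and applying a suitable product of such moves (with inversions to correct signs) should cancel the outer layer $w_1$ from both ends of $\alpha(v_i)$, strictly decreasing $L(\alpha)$; iterating drives $\alpha$ to the identity and exhibits it as a product of the proposed generators.

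The main obstacle is justifying that these right-compositions genuinely reduce length, i.e.\ that the outer clique-layer $w_1$ of a maximal image really is cancelled by the images of the dominating vertices $v_j$. This is exactly where Collins' argument for $\pia_n$ must be generalised to $\raag$: I expect to need Servatius' Centraliser Theorem (Theorem~\ref{servcent}) together with Laurence's Proposition~\ref{Laurence} to control how $\alpha(v_j)$ sits as an outer ``prefix up to cliques'' of the maximal image $\alpha(v_i)$. Two features absent in the free case complicate matters: the outer layer is a commuting clique word rather than a single letter, so several $P_{ij}$ may need to be applied simultaneously; and shuffle-equivalence means every length and prefix comparison must be carried out modulo the commuting relations of $\raag$, which is precisely what the clique-palindromic normal form is designed to manage.
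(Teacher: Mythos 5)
Your easy inclusion and your base case are fine, but the inductive step has genuine gaps, and the specific reduction scheme you propose is false as stated. First, your justification for the availability of the moves $P_{ij}$ is wrong: the block-triangular structure of $\Phi(\alpha)$ constrains only exponent sums, so it cannot force vertices of $\supp(\alpha(v_i))$ to dominate $v_i$ --- for a pure palindromic automorphism every non-central letter of $\alpha(v_i)$ has \emph{even} exponent sum, and a letter can lie in the support with exponent sum zero, hence be invisible to $\Phi$. (Also, $\ppiaG \leq \Aut^0(\raag)$ is not available to you here: in the paper it is a \emph{consequence} of this very theorem.) The domination statement is in fact true, but it follows from Proposition~\ref{Laurence}(2a) applied to $w = \alpha(v_i)$, using $\rk(\alpha(v_i)) = \rk(v_i)$ and the fact that $v_i$ occurs as the central letter --- this rank argument, not the block decomposition, is what the paper uses. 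Second, and more seriously, your selection rule --- attack the outer clique layer $w_1$ of an image of \emph{maximal length} --- need not decrease length, because $P_{ij}^{-1}$ multiplies by the whole word $\alpha(v_j)^{-1}$, not by $v_j^{-1}$. Concretely, in $F_3 = \langle a,b,c\rangle$ let $\alpha = P_{cb}P_{ac}P_{ab}^{-1}$ (with $P_{ij}\colon v_i \mapsto v_jv_iv_j$), so that $\alpha(a) = cbabc$, $\alpha(b) = b$, $\alpha(c) = bcb$; this is pure palindromic. The maximal image is $\alpha(a)$, with outer layer $c$, yet every move you prescribe replaces $\alpha(a)$ by $(bcb)^{\pm1}\,cbabc\,(bcb)^{\pm1}$, a reduced word of length $11$; the unique length-reducing move available attacks the \emph{shorter} image $\alpha(c)$. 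So what your induction needs is an existential Nielsen-type cancellation lemma (``if $\alpha \neq \mathrm{id}$ then \emph{some} pair of images admits enough cancellation''), which is exactly the content you have deferred. Third, your step says nothing when a non-trivial image is supported on a clique (e.g.\ $\raag = \Z^n$ and $\alpha \in \Lambda_n[2]$ non-trivial): there the clique-palindromic normal form of Proposition~\ref{cliqueform} is the central word alone, so there is no outer layer to cancel and no cancellation is possible at all.

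These three points are precisely what the paper's two-stage proof is built to handle, and your sketch is missing all of its main ingredients. Stage one (Lemma~\ref{simple}) is an induction on Laurence's \emph{rank}, not on length: it uses Proposition~\ref{Laurence} and Theorem~\ref{servcent} to show that the non-simple directions of $\alpha$ involve only vertices dominating $v$, and it disposes of the clique-supported (abelian) phenomenon by quoting the known generation of $\Lambda_k[2]$ by the matrices $S_{ij}$, $Z_i$ (\cite[Lemma 2.4]{Ful15_torelli}). This reduces to \emph{simple} automorphisms, whose images have support connected in $\Gamma^c$. Only then does the Collins-style length reduction appear (Lemma~\ref{palstab}), and it is run not in $\raag$ but inside the subgroup $H = H_1 \ast \dots \ast H_t \ast \langle X_v \rangle$ spanned by the non-adjacent dominating vertices of a vertex $v$ of maximal rank not fixed by $\alpha$; that $\alpha$ restricts to an automorphism of $H$ requires Laurence's Lemma 5.5, and it is the \emph{free product} structure of $H$ that makes the cancellation inequality argument (your inequality~\eqref{ineq}) provable, via the dichotomy that a failed inequality forces $w_j = ({w_i}^{\rev})^{-1}v_i^{-\eta_i}z$. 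Without the rank reduction, the abelian base case, and the free-product setting, the length induction you outline cannot be pushed through.
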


Before proving Theorem~\ref{ppiaGgens}, we state a corollary obtained by combining Theorems \ref{exact} and~\ref{ppiaGgens}.
\begin{corollary}\label{centgens}The group $\CGi$ is generated by diagram automorphisms, adjacent dominated transvections and the generators of $\ppiaG$. \end{corollary}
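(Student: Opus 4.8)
The plan is to obtain the generating set by the standard lifting argument applied to the exact sequence of Theorem~\ref{exact}, feeding in the generators of the kernel $\ppiaG$ supplied by Theorem~\ref{ppiaGgens}. Recall that the sequence
\[ 1 \longrightarrow \ppiaG \longrightarrow \CGi \stackrel{\Phi_2}{\longrightarrow} \glnt \]
identifies $\ppiaG$ with the kernel of $\Phi_2$ restricted to $\CGi$, and that Theorem~\ref{exact} moreover tells us the image $\Phi_2(\CGi)$ is generated by the images of the diagram automorphisms and adjacent dominated transvections of $\raag$.

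First I would take an arbitrary $\alpha \in \CGi$ and consider its image $\Phi_2(\alpha) \in \Phi_2(\CGi)$. By Theorem~\ref{exact} I may write $\Phi_2(\alpha)$ as a word in the images of diagram automorphisms and adjacent transvections. Crucially, each such diagram automorphism and each such adjacent transvection is itself an element of $\CGi$ --- as noted just before Proposition~\ref{cliqueform}, the centraliser $\CGi$ contains all diagram automorphisms, inversions and adjacent transvections --- so the corresponding word $\delta$ formed from these actual automorphisms is a genuine element of $\CGi$ lifting $\Phi_2(\alpha)$. Then $\Phi_2(\alpha\delta^{-1}) = \Phi_2(\alpha)\Phi_2(\delta)^{-1}$ is the identity, so $\alpha\delta^{-1}$ lies in the kernel $\ppiaG$. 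By Theorem~\ref{ppiaGgens}, $\alpha\delta^{-1}$ is a product of inversions and dominated elementary palindromic automorphisms. Writing $\alpha = (\alpha\delta^{-1})\delta$ then exhibits $\alpha$ as a product of the three advertised families of generators, completing the argument.

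The one point requiring care --- and the only place the argument could go wrong --- is the existence of the lift $\delta$; that is, confirming that the diagram automorphisms and adjacent transvections whose images generate $\Phi_2(\CGi)$ actually lie in $\CGi$, so that they may be used as the lifted generators rather than merely their images. This is immediate here, since $\CGi$ contains all of these automorphisms, so I do not expect any real obstacle. In particular the generating set produced is exactly the union of the diagram automorphisms, the adjacent dominated transvections, and the generators of $\ppiaG$ (the inversions and the dominated elementary palindromic automorphisms $P_{ij}$), as claimed.
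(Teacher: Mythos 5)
Your argument is correct and is exactly how the paper intends the corollary to be proved: the paper states it as an immediate consequence of "combining Theorems~\ref{exact} and~\ref{ppiaGgens}," which is precisely your lifting argument through the exact sequence $1 \to \ppiaG \to \CGi \to \GL(n,\Z/2)$, using that the diagram automorphisms and adjacent transvections generating the image already lie in $\CGi$. Nothing further is needed.
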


Our proof of Theorem~\ref{ppiaGgens} is an adaptation of Laurence's proof \cite{Lau95} of finite generation of~$\Autraag$. First, in Lemma~\ref{simple} below, we show that any $\alpha \in \ppiaG$ may be precomposed with suitable products of our proposed generators to yield what we refer to as a `simple' automorphism of $\raag$ (defined below).  The simple palindromic automorphisms may then be understood by considering subgroups of $\ppiaG$ that fix certain free product subgroups inside~$\raag$; we define and obtain generating sets for these subgroups in Lemma~\ref{palstab}. Combining these results, we complete our proof of Theorem~\ref{ppiaGgens}.

For each $v \in V$, we define $\alpha \in \ppiaG$ to be \emph{$v$-simple} if  $\supp(\alpha(v))$ is connected in $\Gamma^c$.  We say that $\alpha \in \ppiaG$ is \emph{simple} if $\alpha$ is $v$-simple for all $v \in V$. Laurence's definition of a $v$-simple automorphism $\phi \in \Autraag$ is more general and differs from ours, however the two definitions are equivalent when $\phi \in \ppiaG$. 

Let $S$ denote the set of inversions and dominated elementary palindromic automorphisms in $\piaG$ (that is, the generating set for $\ppiaG$ proposed by Theorem~\ref{ppiaGgens}). We say that $\alpha, \beta \in \ppiaG$ are \emph{$\pi$-equivalent} if there exists $\theta \in \langle S \rangle$ such that $\alpha = \beta \theta$.  In other words, $\alpha, \beta \in \ppiaG$ are $\pi$-equivalent if $\beta^{-1}\alpha \in \langle S \rangle$. 

\begin{lemma}\label{simple} Every $\alpha \in \ppiaG$ is $\pi$-equivalent to some simple automorphism  $\chi\in\ppiaG$. \end{lemma}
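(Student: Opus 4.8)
The plan is to induct on the total length $\|\alpha\| = \sum_{v \in V} |\alpha(v)|$, reducing at each stage by composing $\alpha$ with generators drawn from $S$. Since the inversions and the $P_{ij}$ lie in $\ppiaG$ and $\ppiaG$ is a group (Theorem~\ref{exact}), any automorphism produced this way stays in $\ppiaG$ and is $\pi$-equivalent to $\alpha$ by construction. It therefore suffices to show that whenever $\alpha$ fails to be simple its length can be strictly decreased; the minimal-length members of each $\pi$-equivalence class are then simple, and the terminal automorphism $\chi$ of the induction is the desired simple representative.

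First I would extract the relevant local structure at a vertex where simplicity fails. Fix $v$ with $\alpha(v)$ not $v$-simple, so $\supp(\alpha(v))$ is disconnected in $\Gamma^c$. As $\alpha \in \ppiaG$, the word $\alpha(v)$ is a palindrome of odd length whose unique odd-exponent letter is $v$; one checks such a word is cyclically reduced, so we may pass to its basic form $\alpha(v) = w_1^{r_1} \cdots w_m^{r_m}$ (Section~\ref{combis}), in which the $V_i := \supp(w_i)$ are exactly the connected components of $\supp(\alpha(v))$ in $\Gamma^c$ and $m \geq 2$. Because $\alpha$ is an automorphism we have $\rk(\alpha(v)) = \rk(v)$, and since $v \in \supp(\alpha(v))$, part~(1) of Proposition~\ref{Laurence} gives $\rk(v) \geq \rk(\alpha(v))$, whence $\rk(v) = \rk(\alpha(v))$. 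Writing $V_{i_0}$ for the component containing $v$, part~(2) of Proposition~\ref{Laurence} then forces every other component $V_j$ ($j \neq i_0$) to be a singleton $\{u\}$ with $v \leq u$; as $u$ lies in a different $\Gamma^c$-component from $v$ we also have $[v,u] = 1$, so $v$ is adjacently dominated by $u$ and the dominated elementary palindromic automorphism $P_{v,u}$ is well defined.

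The reduction move is then to peel these dominating letters off the ends of the palindrome. Composing $\alpha$ with $P_{v,u}^{\pm 1}$ has the effect of substituting $v \mapsto u^{\mp 1} v u^{\mp 1}$ into each image word; since $u$ commutes with all of $\supp(\alpha(v))$ and hence appears only in the central clique word of the clique-palindromic normal form of $\alpha(v)$ (Proposition~\ref{cliqueform}), the two inserted letters cancel symmetrically against the occurrences of $u$ flanking $\alpha(v)$, lowering the exponent of $u$ in $\alpha(v)$ by two, with inversions in $S$ used to normalise the sign of the central letter beforehand. Iterating over the finitely many singleton components removes them entirely, after which $\supp(\alpha(v))$ has been reduced to $V_{i_0}$ and $\alpha$ has become $v$-simple.

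The delicate point — and what I expect to be the main obstacle — is that this substitution also alters $\alpha(v')$ for $v' \neq v$, so the gain of two letters in $\alpha(v)$ need not yield a net decrease in $\|\alpha\|$. This is precisely where the palindromic constraint makes the argument more delicate than Laurence's, and it calls for peak-reduction-style bookkeeping. The block-triangular decomposition of Section~\ref{Blocks} confines the damage: $v$ can occur in $\alpha(v')$ only when $v \geq v'$ in the domination order, so I would carry out the reductions in a prescribed order along the domination poset and choose the signs of the $P_{v,u}^{\pm 1}$ so that the inserted syllables cancel rather than accumulate, guaranteeing that the longest image length (and hence $\|\alpha\|$, under a suitable lexicographic refinement) strictly drops. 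Once this length-reduction step is in hand, the lemma follows by induction, the terminal automorphism $\chi$ being simple and $\pi$-equivalent to $\alpha$.
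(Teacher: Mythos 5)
Your opening analysis is sound and matches the paper's first step: writing $\alpha(v)$ in basic form, using $\rk(\alpha(v))=\rk(v)$ together with Proposition~\ref{Laurence} to conclude that every connected component of $\supp(\alpha(v))$ in $\Gamma^c$ other than the one containing $v$ is a singleton $\{u\}$ that adjacently dominates $v$. The gap is in the reduction move that is supposed to do all the work. First, your move --- substituting $u^{\mp1}vu^{\mp1}$ for $v$ in \emph{every} image word --- is \emph{post}-composition $P_{v,u}^{\mp1}\alpha$, whereas $\pi$-equivalence is defined by \emph{pre}composition ($\alpha=\beta\theta$ with $\theta\in\langle S\rangle$); left multiplication only stays inside a $\pi$-equivalence class if $\langle S\rangle$ is normal in $\ppiaG$, which is not known at this stage (it follows from the theorem being proved). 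Second, and more fatally, the substitution does not achieve what you claim: since $u$ commutes with all of $\supp(\alpha(v))$, replacing each $v^{\pm1}$ by $(u^{\mp1}vu^{\mp1})^{\pm1}$ changes the $u$-exponent of $\alpha(v)$ by $\mp 2e$, where $e$ is the exponent sum of $v$ in $\alpha(v)$ --- odd, but not $\pm1$ in general --- not ``by two''. If, say, $\alpha(v)$ has $u$-exponent $2$ and $v$-exponent sum $3$, then no power of your substitution can remove the factor $u^{2}$ (you can only alter the exponent $2$ by multiples of $6$), and meanwhile every other image word containing $v$ gets contaminated. This is precisely the step you flag as ``the delicate point'' and then leave as a plan (``peak-reduction-style bookkeeping''), so the core of the proof is missing even on its own terms.

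The paper's proof avoids both problems with two ingredients absent from your proposal. It chooses $v$ of \emph{maximal rank} among the vertices at which $\alpha$ fails to be simple, and then shows --- using Servatius' Centraliser Theorem (Theorem~\ref{servcent}) and Proposition~\ref{Laurence} again --- that any singleton $u=w_i$ with $[u]'\neq[v]'$ satisfies $\rk(u)>\rk(v)$, hence $\alpha$ is $u$-simple and in fact $\alpha(u)=u^{\pm1}$. With this in hand, \emph{pre}composition by a power of $P_{v,u}$ replaces $\alpha(v)$ by $\alpha(u)^{\pm k}\,\alpha(v)\,\alpha(u)^{\pm k}=u^{\pm k}\alpha(v)u^{\pm k}$, which excises the factor $u^{r_i}$ exactly and changes no other image word, so no global bookkeeping over $\sum_{v}|\alpha(v)|$ is needed. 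Finally, the case where the singleton components lie in $[v]'$ itself is not addressed by your peeling move at all: there $\alpha(u)$ need not be $u^{\pm1}$, and the paper instead observes that $\alpha$ restricts to an element of the congruence group $\Lambda_k[2]$ on the free abelian group $\langle[v]'\rangle$ and invokes the known generation of $\Lambda_k[2]$ by the images of $S$ (\cite[Lemma~2.4]{Ful15_torelli}). Your argument would need both the maximal-rank device (to control $\alpha(u)$) and this congruence-subgroup step to go through.
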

\begin{proof}  Suppose $\alpha \in \ppiaG$.  We note once and for all that the palindromic word $\alpha(u)$ is cyclically reduced, for any $u \in V$. 

Select a vertex $v \in V$ of maximal rank for which $\alpha(v)$ is not $v$-simple. Now write \[ \alpha(v) = {w_1}^{r_1} \dots {w_s}^{r_s} \] in basic form, reindexing if necessary so that $v \in \supp(w_1)$. The ranks of $v$ and $\alpha(v)$ are equal, since $\alpha$ induces an isomorphism from the centraliser in $\raag$ of $v$ to that of $\alpha(v)$. Hence by Proposition~\ref{Laurence}, parts 2(b) and 2(a) respectively, each $w_i \in \raag$ (for $i > 1$) is some vertex generator in $V$, and $w_i \geq' v$. Moreover, for $i > 1$, each $r_i$ is even, since $\alpha(v)$ is palindromic.

Now, for $i >1$, suppose $w_i \geq ' v$ but $[v]' \neq [w_i]'$. By Servatius' Centraliser Theorem (Theorem~\ref{servcent}), we know that the centraliser of a vertex is generated by its star, and hence conclude that $\rk(w_i) > \rk(v)$. This gives that $\alpha$ is $w_i$-simple, by our assumption on the maximality of the rank of $v$. In basic form, then, \[\alpha(w_i) = p^\ell, \] where $\ell \in \Z$, $p \in \raag$, and $\supp(p)$ is connected in $\Gamma^c$. Note also that $\supp (p)$ contains $w_i$, since $\alpha \in \ppiaG$. 

Suppose there exists $t \in \supp(p) \setminus \{w_i \}$. As for $v$ before, by Proposition~\ref{Laurence}, we have $t \geq w_i$, since $\rk(\alpha(w_i)) = \rk(w_i)$. We know $w_i \geq' v$, and so $t \geq v$. Since $w_i$, $v$ and $t$ are pairwise distinct, this forces $w_i$ and $t$ to be adjacent, which contradicts Proposition~\ref{Laurence}, part 2(c). So \[\alpha(w_i) = {w_i}^{\ell} ,\] and necessarily $\ell = \pm1$. Knowing this, we replace $\alpha$ with $\alpha \beta_i$ where $\beta_i \in \langle S \rangle$ is the palindromic automorphism of the form \[v \mapsto {w_i}^{ \frac{\ell r_i}{2}} v {w_i}^{\frac{\ell r_i}{2}}.\] By doing this for each such $w_i$, we ensure that any $w_i$ that strictly dominates $v$ is not in the support of $\alpha \beta_i (v)$. Note $\alpha(v') = \alpha \beta_i (v')$ for all $v' \neq v$.

If $s=1$, then $\alpha$ is $v$-simple, so by our assumption on $v$, we must have $s >1$. Because we have reduced to the case where $w_i \in [v]'$ for $i > 1$, we must have $w_1 = v^{\pm1}$, otherwise we get a similar adjacency contradiction as in the previous paragraph: if there exists $t \in \supp(w_1) \setminus  \{ v \}$, then, as before, $t \geq v$, and since $[w_i]' = [v]'$, this would force $t$ and $v$ to be adjacent. Thus $\alpha(v) \in \langle [v]' \rangle$. Indeed, the discussion in the previous two paragraphs goes through for any $u \in [v]'$, so we may assume that $\alpha(u) \in \langle [v]' \rangle$ for any $u \in [v]'$. Thus $\alpha \langle [v]' \rangle \leq \langle [v]' \rangle$, with equality holding by \cite[Proposition 6.1]{Lau95}.

The group $\langle [v]' \rangle$ is free abelian, and by considering exponent sums, we see that the restriction of $\alpha$ to the group $\langle [v]' \rangle$ is a member of the level 2 congruence subgroup $\Lambda_k[2]$, where $k = | [v]'|$. We know that Theorem~\ref{ppiaGgens} holds in the special case of these congruence groups (see \cite[Lemma 2.4]{Ful15_torelli}, for example), so we can precompose $\alpha$ with the appropriate automorphisms in the set $S$ so that the new automorphism obtained, $\alpha '$, is the identity on $\langle [v]' \rangle$, and acts the same as $\alpha$ on all other vertices in $V$. The automorphisms $\alpha$ and $\alpha '$ are $\pi$-equivalent, and~$\alpha '$ is $v$-simple (indeed: $\alpha ' (v) = v$).

From here, we iterate this procedure, selecting a vertex $u \in V \setminus \{ v \}$ of maximal rank for which $\alpha '$ is not $u$-simple, and so on, until we have exhausted the vertices of $\Gamma$ preventing $\alpha$ from being simple.
\end{proof}

Now, for each $v \in V$, define $\Gamma^v$ be the set of vertices that dominate $v$ but are not adjacent to $v$. Further define $X_v := \{v = v_1, \dots, v_r \} \subseteq \Gamma^v$ to be the vertices of $\Gamma^v$ that are also dominated by $v$. Partition $\Gamma^v$ into its connected components in the graph $\Gamma \setminus \lk(v)$. This partition is of the form
\[ \left ( \bigsqcup_{i=1}^{t} \Gamma_i \right ) \sqcup \left (\bigsqcup_{i=1}^r \{ v_i \} \right ), \] where $ \bigsqcup_{i=1}^{t} \Gamma_i = \Gamma^v \setminus X_v$. Letting $H_i = \langle \Gamma_i \rangle$, we see that
\begin{equation}\label{H} H := \langle \Gamma^v \rangle = H_1 \ast \dots \ast H_t \ast \langle X_v \rangle, \end{equation} where $F_r := \langle X_v \rangle$ is a free group of rank $r$.  Notice that $H$ is itself a right-angled Artin group.

The final step in proving Theorem~\ref{ppiaGgens} requires a generating set for a certain subgroup of palindromic automorphisms in $\Aut(H)$, which we now define. Let $\mathcal{Y}$ denote the subgroup of $\Aut (H)$ consisting of the pure palindromic automorphisms of $H$ that restrict to the identity on each $H_i$. The following lemma says that this group is generated by its intersection with the finite list of generators stated in Theorem~\ref{ppiaGgens}.  In the special case when there are no $H_i$ factors in the free product~\eqref{H} above, this result was established by Collins~\cite{Col95}.  Our proof is a generalisation of his.

\begin{lemma}\label{palstab}The group $\mathcal{Y}$ is generated by the inversions of the free group $F_r$ and the elementary palindromic automorphisms of the form $P(s,t): s \mapsto tst$, where $t \in \Gamma^v$ and $s \in X_v$.
\end{lemma}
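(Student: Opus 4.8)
The plan is to adapt Collins's generation argument for $\pia_r$ \cite{Col95} to this richer setting, by inducting on the total length of the images of the free generators. Since every element of $\mathcal{Y}$ fixes each factor $H_i$ pointwise, such an automorphism $\alpha$ is determined by the tuple $(\alpha(v_1),\dots,\alpha(v_r))$ with $v_i \in X_v$, and purity forces each $\alpha(v_i)$ to be a reduced palindrome in $H$ whose middle letter is $v_i^{\pm 1}$. The listed generators clearly lie in $\mathcal{Y}$ (each fixes every $H_i$), so only the reverse inclusion needs proof. The crucial preliminary observation is that the required stripping moves are always available: every letter of $\alpha(v_i)$ is a vertex of $H = \langle \Gamma^v \rangle$ and hence lies in $\Gamma^v$, while $v_i \in X_v \subseteq [v]$ and $v \leq t$ for every $t \in \Gamma^v$ together give $v_i \leq t$; thus $P(v_i,t)$ is a well-defined dominated elementary palindromic automorphism for every letter $t$ that can occur.

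Set $L(\alpha) = \sum_{i=1}^r |\alpha(v_i)|$, with lengths measured in $H$, and induct on $L$. Since each $\alpha(v_i)$ is a nonempty palindrome we have $L \geq r$, with equality exactly when every $\alpha(v_i) = v_i^{\pm 1}$; in that base case $\alpha$ is a product of inversions of $F_r$ and we are done. For the inductive step, choose $v_i$ with $|\alpha(v_i)|$ maximal and greater than $1$, and let $t$ be the (common) first and last letter of the palindrome $\alpha(v_i)$. If $t$ lies in one of the free factors, say $t \in \Gamma_j$, then $\alpha(t) = t$, so precomposing $\alpha$ with $P(v_i,t)^{\pm 1}$ replaces $\alpha(v_i)$ by $t^{\mp 1}\alpha(v_i)t^{\mp 1}$, strips the two outer letters, and lowers $L$ by $2$ while leaving all other images unchanged. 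As $P(v_i,t) \in \mathcal{Y}$, this keeps us inside $\mathcal{Y}$ and the induction advances.

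The main obstacle is the remaining case, in which every longest image has its outer letter in $X_v$. Here the difficulty is genuine: if the outer letter of $\alpha(v_i)$ is $v_j^{\pm 1}$, then $\alpha(v_j)$ is itself a nontrivial palindrome, and precomposing with $P(v_i,v_j)^{\pm 1}$ need not shorten $\alpha(v_i)$, because the move is controlled by the outer \emph{letter} $v_j$ rather than by the whole outer \emph{syllable} $\alpha(v_j)$. This is precisely the configuration that Collins resolves by a peak-reduction analysis of $\pia_r$, and the heart of the proof is to extend that analysis to $H$. In this case the inert factors $H_i$ play no active role --- their letters, being fixed by $\alpha$, sit in the interiors of the words once the clean reductions above have been exhausted --- so the essential interaction is among the free generators $X_v$. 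One examines how the outer $X_v$-letters of the longest images pair up and shows that some $P(v_i,v_j)^{\pm 1}$ (or an inversion) must strictly decrease $L$; the only ingredient beyond Collins's free-group argument is the presence of the commutation relations of $\Gamma$, which are controlled using the domination data recorded above. Iterating this reduction drives $L$ down to $r$, whence $\alpha$ is a product of the listed generators, completing the proof.
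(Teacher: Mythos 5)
Your induction set-up (total length of the images of $X_v$, base case $l(\alpha)=r$ giving a product of inversions) matches the paper's, and your preliminary observations are fine. But there is a genuine gap exactly where you yourself locate ``the heart of the proof'': the case in which the outer letters of the long images lie in $X_v$. There you only assert that ``one examines how the outer $X_v$-letters of the longest images pair up and shows that some $P(v_i,v_j)^{\pm1}$ (or an inversion) must strictly decrease $L$,'' with no argument. Worse, the plan as stated cannot work: precomposing with $P(v_i,v_j)^{-1}$ sends $v_i \mapsto \alpha(v_j)^{-1}\alpha(v_i)\alpha(v_j)^{-1}$, i.e.\ the move multiplies by the \emph{whole word} $\alpha(v_j)$, so knowing that $\alpha(v_i)$ begins and ends with the \emph{letter} $v_j$ tells you essentially nothing about whether the length drops. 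What is needed is that more than half of $\alpha(v_j)$ cancels against $\alpha(v_i)$ --- a statement about cancellation between entire images, not about outer letters.

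That cancellation statement is precisely the content of the paper's proof, and it is what your proposal is missing. The paper considers the inequality~\eqref{ineq}, which says that for all signs and all distinct $a_i, a_j \in \alpha(\Gamma^v)$ less than half of each word survives no cancellation, and shows it cannot hold for all pairs: if it did, any element of $H$ of length $m$ with respect to the generating set $\alpha(\Gamma^v)$ would have length at least $m$ with respect to the vertices $\Gamma^v$; choosing $v_i \in X_v$ with $L(\alpha(v_i)) > 1$, the vertex $v_i$ would then have $\Gamma^v$-length greater than $1$, a contradiction. Hence~\eqref{ineq} fails for some pair, and then the free product structure of $H$ forces $w_j = ({w_i}^{\rev})^{-1}v_i^{-\eta_i}z$ (writing $a_i = w_i v_i^{\eta_i} {w_i}^{\rev}$, $a_j = w_j v_j^{\eta_j} {w_j}^{\rev}$), whereupon the explicit compositions $\alpha P_{ji}$, $\alpha\iota_j P_{ji}^{-1}$, or $\alpha\iota_j P_{ji}$ (depending on the signs) strictly reduce the length. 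Without this Nielsen-type argument, or some equivalent, your induction does not close. Your ``easy case'' (outer letter in some $\Gamma_j$, hence fixed by $\alpha$) is correct but peripheral: the paper never needs to separate it out, since the fixed vertices simply appear as length-one elements of $\alpha(\Gamma^v)$ in the same cancellation analysis.
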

\begin{proof} For $\alpha \in \mathcal{Y}$, we define its \emph{length} $l(\alpha)$ to be the sum of the lengths of $\alpha(v_i)$ for each $v_i \in X_v$. We induct on this length. The base case is $l(\alpha) = r$, in which case $\alpha$ is a product of inversions of $F_r$. From now on, assume $l(\alpha) > r$.

Let $L(w)$ denote the length of a word $w$ in the right-angled Artin group $H$, with respect to the vertex set $\Gamma^v$. Suppose for all $\epsilon_i, \epsilon_j \in \{ \pm1 \}$ and distinct $a_i, a_j \in \alpha(\Gamma^v)$ we have
\begin{align} L( a_i^{\epsilon_i} a_j^{\epsilon_j}) > L(a_i) + L(a_j) - 2( \lfloor L(a_i) / 2 \rfloor + 1) , \label{ineq}\end{align} where $\lfloor x \rfloor$ is the integer part of $x \in [0, \infty)$. Conceptually, we are assuming that for every expression $a_i^{\epsilon_i} a_j^{\epsilon_j}$, whatever cancellation occurs between the words $a_i^{\epsilon_i}$ and $a_j^{\epsilon_j}$, more than half of $a_i^{\epsilon_i}$ and more than half of  $a_j^{\epsilon_j}$ survives after all cancellation is complete. 

Fix $v_i \in X_v$ so that $a_i := \alpha(v_i)$ satisfies $L(a_i) > 1$.  Such a vertex $v_i$ must exist, as we are assuming that $l(\alpha) > r$.  Notice that since $L(a_i) > 1$, we have $v_i \neq a_i^{\pm 1}$. Now, any reduced word in $H$ of length $m$ with respect to the generating set $\alpha(\Gamma^v)$ has length at least $m$ with respect to the vertex generators $\Gamma^v$, due to our cancellation assumption. Since $v_i \neq {a_i}^{\pm1 }$, the generator $v_i$ must have length strictly greater than 1 with respect to $\alpha(\Gamma^v)$, and so $v_i$ must have length strictly greater than $1$ with respect to $\G^v$.  But $v_i$ is an element of $\G^v$, which is a contradiction. Therefore, the above inequality \eqref{ineq} fails at least once.

We now argue each case separately. Let $a_i, a_j \in \alpha(\G^v)$ be distinct and write $$a_i = \alpha(v_i) = w_i {v_i}^{\eta_i} {w_i}^{\rev} \quad \mbox{and} \quad a_j = \alpha(v_j) = w_j {v_j}^{\eta_j} {w_j}^{\rev},$$ where  $v_i, v_j \in \Gamma^v$, $w_i$, $w_j \in H$ and $\eta_i,\eta_j \in \{\pm 1\}$.  Suppose the inequality~\eqref{ineq} fails for this pair when $\epsilon_i = \epsilon_j = 1$. Then it must be the case that $w_j = ({w_i}^{\rev})^{-1} v_i^{-\eta_i}z$, for some $z \in H$, since $H$ is a free product. In this case, replacing $\alpha$ with $\alpha P(v_j,v_i) = \alpha P_{ji}$ decreases the length of the automorphism. We reduce the length of $\alpha$ in the remaining cases as follows:
\begin{itemize}
\item For $\epsilon_i = \epsilon_j = -1$, replace $\alpha$ with $\alpha \iota_j {P(v_j,v_i)}^{-1} = \alpha \iota_j P_{ji}^{-1}$.

\item For $\epsilon_i = -1$ and $\epsilon_j = 1$, or vice versa, replace $\alpha$ with $\alpha \iota_j P(v_j,v_i) = \alpha \iota_j P_{ji}$. \end{itemize}

By induction, we have thus established the proposed generating set for the group $\mathcal{Y}$.
\end{proof}

We now prove Theorem~\ref{ppiaGgens}, obtaining a finite generating set for the group $\ppiaG$.
\begin{proof}[Proof of Theorem \ref{ppiaGgens}] Let $S$ denote the set of inversions and dominated elementary palindromic automorphisms in $\ppiaG$. By Lemma~\ref{simple}, all we need do is write any simple $\alpha \in \ppiaG$ as a product of members of $S^{\pm1}$. 

Let $v$ be a vertex of maximal rank that is not fixed by $\alpha$. Define $\Gamma^v$, its partition, and the free product it generates using the same notation as in the discussion before the statement of Lemma~\ref{palstab}. By maximality of the rank of $v$, any vertex of any $\Gamma_i$ must be fixed by $\alpha$ (since it has rank higher than that of $v$). By Lemma 5.5 of Laurence and its corollary \cite{Lau95}, we conclude that (for this $v$ we have chosen), $\alpha(H) = H$. 

This establishes that $\alpha$ restricted to $H$ lies in the group $\mathcal{Y} \leq \Aut(H)$, for which Lemma~\ref{palstab} gives a generating set. Thus we are able to precompose $\alpha$ with the appropriate members of~$S^{\pm1}$ to obtain a new automorphism $\alpha '$ that is the identity on $H$, and which agrees with~$\alpha$ on $\Gamma \setminus \Gamma^v$. In particular, $\alpha '$ fixes $v$. We now iterate this procedure until all vertices of $\Gamma$ are fixed, and have thus proved the theorem.
\end{proof}

With Theorem~\ref{ppiaGgens} established, we are now able to prove our first main result, Theorem~\ref{palgens}, and so obtain our finite generating set for $\pia_\G$.

\begin{proof}[Proof of Theorem~\ref{palgens}] By Corollary~\ref{piaG}, we have that $\pia_\G$ splits as  \[ \pia_\G \cong \ppiaG \rtimes D_\Gamma,\] and so to generate $\pia_\G$, it suffices to combine the generating set for $\ppiaG$ given by Theorem~\ref{ppiaGgens} with the diagram automorphisms of $\raag$. Thus the group $\pia_\G$ is generated by the set of all diagram automorphisms, inversions and well-defined dominated elementary palindromic automorphisms.
\end{proof}

We end this section by remarking that the proof techniques we used in establishing Theorem~\ref{palgens} allow us to obtain finite generating sets for a more general class of palindromic automorphism groups of $\raag$.  Having chosen an indexing $v_1,\dots,v_n$ of the vertex set $V$ of~$\G$, denote by $\pia_\G(k)$ the subgroup of $\pia_\G$ that fixes each of the vertices $v_1, \dots, v_k$. Note that a reindexing of $V$ will, in general, produce non-isomorphic stabiliser groups.  We are able to show that each $\piaG (k)$ is generated by its intersection with the finite set $S$.

\begin{theorem} \label{stabs}The stabiliser subgroup $\piaG (k)$ is generated by the set of diagram automorphisms, inversions and dominated elementary palindromic automorphisms that fix each of $v_1, \dots, v_k$. \end{theorem}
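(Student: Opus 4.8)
The plan is to mimic the proof of Theorem~\ref{ppiaGgens}, carrying the extra constraint that the vertices $v_1,\dots,v_k$ must remain fixed throughout. First I would observe that, by Corollary~\ref{piaG}, the splitting $\piaG \cong \ppiaG \rtimes \diag$ restricts to a splitting of $\piaG(k)$: a diagram automorphism fixing a palindrome $\alpha(v_j)$ must fix its middle letter $v_j$ (by the argument in Proposition~\ref{diagram_pure}, where $\delta(v_j)$ is that middle letter), so the diagram part $\delta$ of any $\alpha \in \piaG(k)$ already fixes each of $v_1,\dots,v_k$, and consequently so does the pure part $\gamma = \delta^{-1}\alpha$. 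Thus it suffices to generate $\ppiaG(k) := \ppiaG \cap \piaG(k)$ by those inversions and dominated elementary palindromic automorphisms that fix $v_1,\dots,v_k$, and then adjoin the diagram automorphisms fixing these vertices.

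For the pure part, I would re-run the simplification and generation arguments of Lemma~\ref{simple} and of the proof of Theorem~\ref{ppiaGgens}, but organised so that the vertices are processed in an order that never disturbs $v_1,\dots,v_k$. The key point is that every generator used in those proofs, namely an inversion $\iota_j$ or a dominated elementary palindromic automorphism $P_{ij}\colon v_i \mapsto v_j v_i v_j$, alters only a single vertex generator $v_i$ (respectively $v_j$), fixing all others. Hence if at each stage we only ever apply generators $\iota_j$ or $P_{ij}$ with $v_i \in V \setminus \{v_1,\dots,v_k\}$, the moves in $S \cap \piaG(k)$ suffice and the vertices $v_1,\dots,v_k$ stay put. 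Since $\alpha \in \piaG(k)$ already fixes $v_1,\dots,v_k$, these vertices are never the ones obstructing simplicity, so the maximal-rank selection of a non-$v$-simple or non-fixed vertex $v$ in Lemma~\ref{simple} and in the proof of Theorem~\ref{ppiaGgens} can always be made within $V \setminus \{v_1,\dots,v_k\}$; the inductive reductions then use only generators that fix $v_1,\dots,v_k$.

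The main obstacle is to ensure that the free-abelian congruence-subgroup step in Lemma~\ref{simple} and the free-product step governed by Lemma~\ref{palstab} can both be carried out without moving any of $v_1,\dots,v_k$. Concretely, when $\alpha$ restricts to a member of $\Lambda_k[2]$ on an adjacent domination class $\langle [v]' \rangle$, or to an element of $\mathcal{Y}$ on $H = \langle \Gamma^v \rangle$, I must check that the subset of $S$ employed to trivialise that restriction can be taken inside $S \cap \piaG(k)$. This follows because any $v_i \in \{v_1,\dots,v_k\}$ lying in $[v]'$ or in $X_v$ is already fixed by $\alpha$, so the relevant restriction is the identity on the corresponding coordinate and no generator touching $v_i$ is ever needed: one simply applies Lemma~\ref{palstab} (and the congruence-group version from \cite[Lemma 2.4]{Ful15_torelli}) to the free subgroup or subcomplex spanned by the non-fixed vertices. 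Verifying that these restricted generating sets still suffice is the delicate bookkeeping, but no new ideas beyond those of Theorem~\ref{ppiaGgens} are required, and the induction then terminates with $\alpha$ written as a product of members of $S \cap \piaG(k)$ together with diagram automorphisms fixing $v_1,\dots,v_k$.
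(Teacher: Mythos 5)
Your overall plan coincides with the paper's own proof: restrict the splitting $\piaG \cong \ppiaG \rtimes \diag$ of Corollary~\ref{piaG} to the stabiliser, then re-run the proofs of Lemma~\ref{simple}, Lemma~\ref{palstab} and Theorem~\ref{ppiaGgens} and check that every generator one is forced to apply moves only a vertex that $\alpha$ does not fix (while its conjugating letter may well be one of $v_1,\dots,v_k$). The splitting observation and this moved-vertex/conjugator distinction are both correct.

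The gap is in how you dispose of the delicate steps, and it is exactly the point the paper has to treat with a different citation. You claim that since $\alpha$ is the identity on the fixed coordinates, ``no generator touching $v_i$ is ever needed: one simply applies Lemma~\ref{palstab} (and \cite[Lemma 2.4]{Ful15_torelli}) to the free subgroup or subcomplex spanned by the non-fixed vertices.'' This step fails, because an element of $\piaG(k)$ need not preserve the subgroup generated by the non-fixed vertices: it can send a non-fixed vertex to a word involving fixed ones. For instance, take $v_i, v_j \in [v]'$ with $j \leq k < i$; then $P_{ij}\colon v_i \mapsto v_j v_i v_j$ lies in $\ppiaG \cap \piaG(k)$ (indeed it is one of the proposed generators), but its restriction to $\langle [v]' \rangle$ has matrix $S_{ji}$, which does not lie in the copy of the congruence group supported on the non-fixed coordinates, so the object you propose to feed into \cite[Lemma 2.4]{Ful15_torelli} is not defined and your induction cannot express this automorphism. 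What is actually needed is a \emph{stabiliser} version of generation for $\Lambda_m[2]$: the subgroup of $\Lambda_m[2]$ fixing $e_1,\dots,e_p$ consists of block matrices $\bigl(\begin{smallmatrix} I & B \\ 0 & C \end{smallmatrix}\bigr)$ with $B$ even and $C \in \Lambda_{m-p}[2]$, and one must show it is generated by the $Z_l$ and $S_{ji}$ with $l, i > p$ and $j$ arbitrary; the $B$-part, which your restriction discards, is generated precisely by the $S_{ji}$ with $j \leq p < i$. This is the content of \cite[Lemma 3.5]{Ful15_torelli}, which is what the paper cites at this point, not Lemma 2.4. The same issue affects your use of Lemma~\ref{palstab}: the correct observation (and the paper's) is not that one may restrict to the sub-free-product on the non-fixed vertices, but that the length-reduction induction in Lemma~\ref{palstab} only ever precomposes with $\iota_j$ or $P(v_j,v_i)^{\pm1}$ when $L(\alpha(v_j)) > 1$, hence never when $v_j$ is fixed, while the generators $P(s,t)$ with $t$ fixed and $s$ non-fixed remain available and are genuinely needed.
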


Throughout the proof of Theorem~\ref{ppiaGgens}, each time that we precomposed some $\alpha \in \ppiaG$ by an inversion $\iota_i$, an elementary palindromic automorphism~$P_{ij}$, or its inverse~$P_{ij}^{-1}$, it was because the generator $v_i$ was not fixed by $\alpha$. If $v_j \in V$ was already fixed by~$\alpha$, we had no need to use $\iota_j$ or any of the $P_{jk}^{\pm 1}$ ($j \neq k$) in this way. (That this claim holds in the second-last paragraph of the proof of Lemma~\ref{simple}, where we are working in the group~$\Lambda_k[2]$, follows from \cite[Lemma 3.5]{Ful15_torelli}.)  The same is true when we extend $\ppiaG$ to $\piaG$ using diagram automorphisms, in the proof of Theorem~\ref{palgens}. Thus by following the same method as in our proof of Theorem~\ref{palgens}, we are also able to obtain the more general result, Theorem~\ref{stabs}: our approach had already written $\alpha \in \piaG(k)$ as a product of the generators proposed in the statement of Theorem~\ref{stabs}.

\section{The palindromic Torelli group}\label{Torelli}

Recall that we defined the \emph{palindromic Torelli group} $\ptor_\G$ to consist of the palindromic automorphisms of $\raag$ that act trivially on $H_1 (\raag, \Z)$.  Our main goal in this section is to prove Theorem~\ref{ptorgens}, which gives a generating set for $\ptor_\G$.  For this, in Section~\ref{ImagePure} we obtain a finite presentation for the image in $\gln$ of the pure palindromic automorphism group.  Using the relators from this presentation, we then prove Theorem~\ref{ptorgens} in Section~\ref{torsec}.

\subsection{Presenting the image in $\gln$ of the pure palindromic automorphism group}\label{ImagePure}

In this section we prove Theorem~\ref{raagcongpres}, which establishes a finite presentation for the image of the pure palindromic automorphism group $\ppiaG$ in $\gln$, under the canonical map induced by abelianising $\raag$.  Corollary~\ref{SplitPure} then gives a splitting of $\ppiaG$. 

Recall that $\Lambda_n[2]$ denotes the principal level 2 congruence subgroup of $\gln$.  We start by recalling a finite presentation for $\Lambda_n[2]$ due to the first author.  For $1 \leq i \neq j \leq n$, let $S_{ij} \in \Lambda_n[2]$ be the matrix that has 1s on the diagonal and 2 in the $(i,j)$ position, with 0s elsewhere, and let $Z_i \in \Lambda_n[2]$ differ from the identity matrix only in having $-1$ in the $(i,i)$ position. Theorem \ref{congpres} gives a finite presentation for $\Lambda_n[2]$ in terms of these matrices. 
\begin{theorem}[Fullarton \cite{Ful15_torelli}]\label{congpres}The principal level 2 congruence group $\Lambda_n[2]$ is generated by $$\{ S_{ij}, Z_i \mid 1 \leq i \neq j \leq n \},$$ subject to the defining relators

\begin{multicols}{2}
\begin{enumerate}
    \item ${Z_i}^2$
    \item $[Z_i,Z_j]$
    \item $(Z_iS_{ij})^2$
    \item $(Z_jS_{ij})^2$
    \item $[Z_i, S_{jk}]$
    \item $[S_{ki}, S_{kj}]$
    \item $[S_{ij},S_{kl}]$
    \item $[S_{ji},S_{ki}]$
    \item $[S_{kj},S_{ji}]{S_{ki}}^{-2}$
    \item $(S_{ij}{S_{ik}}^{-1}S_{ki}S_{ji}S_{jk}{S_{kj}}^{-1})^2$
\end{enumerate}
\end{multicols} where $1 \leq i,j,k,l \leq n$ are pairwise distinct.
  \end{theorem}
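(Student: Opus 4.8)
The plan is to present $\Lambda_n[2]$ as a quotient of the pure palindromic automorphism group of the free group $F_n$ and to push forward a known presentation of the latter. Writing $\Phi$ for the abelianisation map, one has the short exact sequence
\[ 1 \longrightarrow \ptor_n \longrightarrow \ppia_n \xrightarrow{\ \Phi\ } \Lambda_n[2] \longrightarrow 1, \]
since the image of the pure palindromic group $\ppia_n$ in $\gln$ consists exactly of the matrices congruent to the identity modulo $2$, and its kernel is precisely the palindromic Torelli group $\ptor_n$. The group $\ppia_n$ is generated by the inversions $\iota_j$ and the elementary palindromic automorphisms $P_{ij}\colon x_i \mapsto x_j x_i x_j$, whose images under $\Phi$ are the matrices $Z_j$ and $S_{ji}$ --- precisely the generating set proposed in the theorem. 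A finite presentation of $\ppia_n$ is available from Collins' presentation of $\pia_n$ \cite{Col95} via the splitting $\pia_n = \ppia_n \rtimes S_n$ (the symmetric group being the diagram automorphisms of $F_n$).

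Since a quotient $G/N$ is presented by the generators and relators of $G$ together with any set of words normally generating $N$, it now suffices to adjoin to this presentation of $\ppia_n$ a normal generating set for $\ptor_n$. Such a set is supplied by Theorem~A of \cite{Ful15_torelli}: the doubled commutator transvections and the separating $\pi$-twists. This yields \emph{some} finite presentation of $\Lambda_n[2]$ on the generators $\{S_{ij}, Z_i\}$; the substantive task, which I expect to be the main obstacle, is to reduce it by Tietze transformations to the clean list of relators (1)--(10). The easy half is to verify by direct matrix multiplication in $\gln$ that each of (1)--(10) holds, which exhibits a surjection from the group defined by (1)--(10) onto $\Lambda_n[2]$; all the difficulty lies in showing \emph{completeness}, that every pushed-forward relator of $\ppia_n$ and every Torelli generator is a consequence of (1)--(10). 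I anticipate that the sign relations (1)--(4) and the commutation relations (5)--(9) absorb the images of Collins' relators together with the elementary identities among the $S_{ij}$, while the length-twelve relator (10) encodes the genuinely palindromic content of the separating $\pi$-twist.

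As an independent route to completeness I would instead prove injectivity of the surjection $G \twoheadrightarrow \Lambda_n[2]$ directly, by establishing a normal form in $G = \langle S_{ij}, Z_i \mid \text{(1)--(10)} \rangle$. The idea is that relations (5)--(9) allow one to sort and collect the generators $S_{ij}$ operating within a single column, relations (1)--(4) control the diagonal signs $Z_i$, and relation (10) governs the passage between distinct columns, so that any word mapping to the identity matrix can be rewritten to the empty word using only (1)--(10). Verifying that these relations genuinely suffice for such a reduction --- in particular that the three-index interactions force no additional relation --- is the delicate point.
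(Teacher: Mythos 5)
This theorem is not proved in the paper you are reading: it is imported verbatim from \cite{Ful15_torelli} (Corollary 1.1 there), and the introduction of the present paper records that the source derives it exactly along the lines you propose --- by combining the generating set for $\ptor_n$ (Theorem A of \cite{Ful15_torelli}) with a presentation of $\ppia_n$ obtained from Collins \cite{Col95}, and reading off a presentation of the quotient $\Lambda_n[2] \cong \ppia_n / \ptor_n$. Your strategy is therefore the right one, and it is not circular: Theorem A of \cite{Ful15_torelli} is proved there by an independent argument (simple connectivity of a complex of partial $\pi$-bases), not from any presentation of $\Lambda_n[2]$. Note also that the two normal generators of $\ptor_n$ push forward to instances of relators (8) and (10): $[P_{12},P_{13}] \mapsto [S_{21},S_{31}]$ and $\chi_2(1,2,3) \mapsto (S_{32}S_{31}^{-1}S_{13}S_{23}S_{21}S_{12}^{-1})^2$, so your expectation about where the ``genuinely palindromic'' relators come from is accurate.

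As a proof, however, your proposal has two genuine gaps. First, exactness of your sequence at $\Lambda_n[2]$ --- surjectivity of $\Phi \colon \ppia_n \to \Lambda_n[2]$ --- is precisely the ``generated by $\{S_{ij}, Z_i\}$'' half of the statement, and you assert it in one clause without proof or citation; it is a true but non-obvious fact about the level $2$ congruence subgroup (provable by row reduction mod $2$ and induction, or by citation), and without it you have only that the image of $\ppia_n$ is \emph{contained} in $\Lambda_n[2]$. Second, and more seriously, the entire content of a presentation theorem is that the stated relator list is \emph{complete}, and this is exactly the step you defer: you do not extract an explicit presentation of $\ppia_n$ from Collins' presentation of $\pia_n$ (a Reidemeister--Schreier or semidirect-product computation that must actually be performed), and you only ``anticipate'' that its pushed-forward relators together with the Torelli relators reduce by Tietze transformations to (1)--(10). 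Your alternative route via a normal form in the abstract group $\langle S_{ij}, Z_i \mid (1)\text{--}(10) \rangle$ is likewise only an idea, and is arguably harder than the Tietze reduction. So what you have is an accurate reconstruction of the plan of the proof in \cite{Ful15_torelli}, with the easy half (the relators hold in $\Lambda_n[2]$) correctly identified as routine, but the substantive half missing.
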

  
 We will use this presentation of $\Lambda_n[2]$ to obtain a finite presentation of the image of $\ppiaG$ in $\gln$. Observe that $\iota_j \mapsto Z_j$ and $P_{ij} \mapsto S_{ji}$ ($v_i \leq v_j$) under the canonical map $\Phi : \Autraag \to \gln$. Let $R_\Gamma$ be the set of words obtained by taking all the relators in Theorem \ref{congpres} and removing those that include a letter $S_{ji}$ with $v_i \not \leq v_j$.
  
  \begin{theorem}\label{raagcongpres}The image of $\ppiaG$ in $\gln$ is a subgroup of $\Lambda_n[2]$, with finite presentation 
  \[ \left \langle \{ Z_k, S_{ji} : 1 \leq k \leq n, v_i \leq v_j \} \mid R_\Gamma \right \rangle. \] \end{theorem}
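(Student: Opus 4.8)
The plan is to identify $\Theta := \Phi(\ppiaG)$ with the abstractly presented group $\widehat\Theta := \langle G' \mid R_\Gamma\rangle$, where $G' = \{Z_k, S_{ji} : v_i \leq v_j\}$, by first exhibiting a surjection $\pi\colon \widehat\Theta \to \Theta$ and then proving it injective. The surjection is the easy half. By Theorem~\ref{ppiaGgens} the group $\ppiaG$ is generated by the inversions $\iota_j$ and the $P_{ij}$ with $v_i \leq v_j$, and these map under $\Phi$ to $Z_j$ and $S_{ji}$; hence $\Theta$ is generated by $G'$, and since each $Z_j$ and $S_{ji}$ is congruent to $I$ modulo $2$ we get $\Theta \leq \Lambda_n[2]$. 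Before proceeding I would record the key structural fact that the domination relation $\leq$ is transitive (and reflexive): chasing the definition $\lk(u)\subseteq\st(v)$ shows $u\leq v$, $v\leq w$ imply $u \leq w$. Consequently every element of $\Theta$ has its nonzero off-diagonal entries confined to positions $(a,b)$ with $v_b \leq v_a$, this ``domination support pattern'' being closed under multiplication precisely because $\leq$ is transitive. Each relator in $R_\Gamma$ is a relator of $\Lambda_n[2]$ involving only generators of $G'$, so it holds in $\Theta$; thus $\pi$ is a well-defined surjection.

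For injectivity, the tempting approach is a retraction $\Lambda_n[2] \to \widehat\Theta$ that kills every forbidden generator $S_{ab}$ (those with $v_b \not\leq v_a$). I would flag immediately that this \emph{fails}: the commutator relator $[S_{kj},S_{ji}]\,S_{ki}^{-2}$ can have $S_{kj}$ forbidden while $S_{ji}$ and $S_{ki}$ are both allowed — take $v_i$ dominated by both $v_j$ and $v_k$ with $v_j \not\leq v_k$ — and killing $S_{kj}$ leaves the nontrivial word $S_{ki}^{-2}$, which is not a consequence of $R_\Gamma$. (One can check similarly that no retraction $\Lambda_n[2] \to \Theta$ exists, since no element of $\Theta$ has the required $(k,j)$-entry.) Instead I would induct on the number $m$ of domination classes, exploiting the lower block-triangular decomposition of Section~\ref{Blocks}. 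Let $C_m$ be a maximal domination class. Restricting a matrix in $\Theta$ to the block complementary to $C_m$ and to the $C_m$-block gives a homomorphism $\Theta \to \bar\Theta \times \Theta_{C_m}$, where $\bar\Theta$ is the analogous group for the remaining classes and $\Theta_{C_m} \cong \Lambda_{|C_m|}[2]$ is the top diagonal block. Its kernel $K$ — matrices that are the identity on both diagonal parts — is the free abelian group generated by the cross generators $S_{ba}$ with $b \in C_m$ and $v_a \leq v_b$ for $a$ in a lower class; one checks the extension splits, so $\Theta \cong K \rtimes (\bar\Theta \times \Theta_{C_m})$.

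I would then assemble the standard presentation of this semidirect product: a presentation of $\bar\Theta$ (by the inductive hypothesis, its generators being the allowed generators among the lower classes and its relators the correspondingly supported Fullarton relators), the presentation of $\Theta_{C_m} \cong \Lambda_{|C_m|}[2]$ given by Theorem~\ref{congpres} (the base case $m=1$), the commuting relations expressing that $K$ is free abelian, and the action relations describing conjugation of the cross generators by the diagonal generators. The generators so collected are exactly $G'$, since every allowed generator is either a cross generator to $C_m$, a within-$C_m$ generator, or a generator among the lower classes.

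The heart of the argument, and the step I expect to be the main obstacle, is verifying that this assembled relator set is precisely $R_\Gamma$. The delicate relations are the action relations arising from the commutator relator $[S_{kj},S_{ji}]\,S_{ki}^{-2}$: conjugating a cross generator $S_{ba}$ by an allowed $S_{cb}$ (with $c\in C_m$) or by a lower generator $S_{ac'}$ produces $S_{ca}^{\pm2}$, respectively $S_{bc'}^{\pm2}$, and transitivity of domination guarantees the produced generator is again allowed ($v_a \leq v_b \leq v_c$, resp.\ $v_{c'} \leq v_a \leq v_b$), so the relevant instance of this relator genuinely lies in $R_\Gamma$ — this is exactly where the earlier naive retraction broke down, and where transitivity is indispensable. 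Conversely, every ``mixed'' relator of $R_\Gamma$, one with indices in both $C_m$ and the lower classes, is one of these action or commuting relations; in particular the order-two relator $(S_{ij}{S_{ik}}^{-1}S_{ki}S_{ji}S_{jk}{S_{kj}}^{-1})^2$ lies in $R_\Gamma$ only when all three of $v_i,v_j,v_k$ mutually dominate, forcing them into a single class, so it never contributes a mixed relation. This matching, with the base case supplied by Theorem~\ref{congpres} and the inductive hypothesis handling $\bar\Theta$, shows that $\pi$ is an isomorphism and completes the proof.
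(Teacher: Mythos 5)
Your proposal is correct and takes essentially the same route as the paper: both use the lower block-triangular decomposition to peel off one domination class at a time, presenting $\Theta$ as an iterated semidirect product whose base case is Theorem~\ref{congpres} and whose splitting, commutation and action relators all lie in $R_\Gamma$. The differences are cosmetic — the paper splits off the minimal class via a forgetful map (so its kernel is $\Lambda_k[2] \ltimes \Z^t$ rather than your purely free abelian $K$), while you peel a maximal class and spell out the transitivity-of-domination verification that the paper leaves implicit.
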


\begin{proof}By Theorem \ref{ppiaGgens}, we know that $\ppiaG \leq \Aut^0(\raag)$, and so matrices in $\Theta:=\Phi(\ppiaG) \leq \gln$ may be written in the lower-triangular block decomposition discussed in Section~\ref{Blocks}. Moreover, the matrix in a diagonal block of rank $k$ in some $A \in \Theta$ must lie in $\Lambda_{k}[2]$. 

We now use this block decomposition to obtain the presentation of $\Theta$ in the statement of the theorem. Observe that we have a forgetful map $\mathcal{F}$ defined on $\Theta$, where we forget the first $k:= |[v_1]|$ rows and columns of each matrix. This is a well-defined homomorphism, since the determinant of a lower block-triangular matrix is the product of the determinants of its diagonal blocks. Let $\mathcal{Q}$ denote the image of this forgetful map, and $\mathcal{K}$ its kernel. We have $\mathcal{K} = \Lambda_k[2] \times \Z^t$, where $t$ is the number of dominated transvections that are forgotten under the map $\mathcal{F}$, and the $\Lambda_k[2]$ factor is generated by the images of the inversions and dominated elementary palindromic automorphisms that preserve the subgroup $\langle [v_1 ] \rangle$.

The group $\Theta$ splits as $\mathcal{K} \rtimes \mathcal{Q}$, with the relations corresponding to the semi-direct product action, and those in the obvious presentation of $\mathcal{K}$, all lying in $R_\G$. Now, we may define a similar forgetful map on the matrix group $\mathcal{Q}$, so by induction $\Lambda$ is an iterated semi-direct product, with a complete set of relations given by $R_\G$.
\end{proof}

Using the above presentation, we are able to obtain the following corollary, regarding a splitting of the group $\ppiaG$. Recall that $I_\G$ is the subgroup of $\Autraag$ generated by inversions. We denote by $\epiaG$ the subgroup of $\ppiaG$ generated by all dominated elementary palindromic automorphisms.

\begin{corollary}\label{SplitPure} The group $\ppiaG$ splits as $\epiaG \rtimes I_\G$. \end{corollary}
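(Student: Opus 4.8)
The plan is to realise $\ppiaG$ as an internal semidirect product by checking the three standard conditions: that $\epiaG \trianglelefteq \ppiaG$, that $\ppiaG = \epiaG \cdot I_\G$, and that $\epiaG \cap I_\G = \{1\}$. The product condition is immediate from Theorem~\ref{ppiaGgens}, which says $\ppiaG$ is generated by the inversions and the dominated elementary palindromic automorphisms; since $I_\G$ and $\epiaG$ are by definition the subgroups these two families generate, once $\epiaG$ is normal the set $\epiaG \cdot I_\G$ is a subgroup containing all generators and hence equals $\ppiaG$.

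For normality, since $\ppiaG = \langle \epiaG, I_\G\rangle$ it suffices to check that each inversion $\iota_k$ conjugates each generator $P_{ij}$ of $\epiaG$ back into $\epiaG$ (generators of $\epiaG$ visibly preserve $\epiaG$). A direct evaluation on the vertex generators shows $\iota_k P_{ij}\iota_k = P_{ij}$ when $k \notin \{i,j\}$ and $\iota_k P_{ij}\iota_k = P_{ij}^{-1}$ when $k \in \{i,j\}$; for instance, for $k = j$ one computes $\iota_j P_{ij}\iota_j(v_i) = \iota_j(v_j v_i v_j) = v_j^{-1} v_i v_j^{-1} = P_{ij}^{-1}(v_i)$, every other vertex being fixed throughout. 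In each case the conjugate lies in $\{P_{ij}, P_{ij}^{-1}\} \subseteq \epiaG$, so $\epiaG \trianglelefteq \ppiaG$. This conjugation rule is exactly what the relators $(Z_iS_{ij})^2$, $(Z_jS_{ij})^2$ and $[Z_i,S_{jk}]$ of Theorem~\ref{congpres} encode, so the presentation of $\Theta = \Phi(\ppiaG)$ in Theorem~\ref{raagcongpres} already records the semidirect-product action at the level of the image.

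The crux is the trivial-intersection condition, which I would establish with a single homomorphism detecting the sign recorded by the inversions. By Theorem~\ref{exact} every $\Phi(\alpha)$ with $\alpha \in \ppiaG$ lies in $\Lambda_n[2]$, so we may write $\Phi(\alpha) = I + 2A$; because the level-$2$ congruence subgroup of $\GL(n,\Z/4)$ is abelian (as $(I+2A)(I+2B) \equiv I + 2(A+B) \bmod 4$), the assignment $\sigma : \alpha \mapsto (\text{diagonal of } A) \bmod 2$ is a well-defined homomorphism $\ppiaG \to (\Z/2)^n$. On generators $\sigma(\iota_k) = e_k$ and $\sigma(P_{ij}) = 0$ (since $\Phi(\iota_k) = Z_k$ and $\Phi(P_{ij}) = S_{ji}$), so $\sigma(\epiaG) = 0$ while $\sigma$ restricts to an isomorphism $I_\G \xrightarrow{\sim} (\Z/2)^n$, confirming in particular that $I_\G \cong (\Z/2)^n$ as no nontrivial product of distinct inversions can be trivial. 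Hence any element of $\epiaG \cap I_\G$ lies in $\ker\sigma \cap I_\G = \{1\}$, giving the trivial intersection.

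Combining the three conditions yields $\ppiaG = \epiaG \rtimes I_\G$. The main obstacle to bear in mind is that $\Phi$ is \emph{not} injective on $\ppiaG$—its kernel is the palindromic Torelli group $\ptorG$—so the semidirect splitting of the image $\Theta$ visible in Theorem~\ref{raagcongpres} does not on its own descend to $\ppiaG$. What rescues the argument is that the complement $I_\G$ is detected faithfully by $\sigma$ (equivalently $I_\G \cap \ker\Phi = \{1\}$, since any nontrivial inversion-product acts as $-1$ on some homology coordinate), so the entire kernel $\ptorG$ is absorbed into the normal factor $\epiaG$ while the complement survives intact.
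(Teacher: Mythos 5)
Your proof is correct, and while it follows the same three-condition skeleton as the paper (generation from Theorem~\ref{ppiaGgens}, normality of $\epiaG$, trivial intersection), it handles the crux --- showing $\epiaG \cap I_\G = \{1\}$ --- by a genuinely different and more self-contained mechanism. The paper's proof deduces triviality of $\Phi(\alpha)$ for $\alpha \in \epiaG \cap I_\G$ by appealing to the finite presentations of $\Theta = \Phi(\ppiaG)$ and of $\Lambda_n[2]$ (Theorems~\ref{raagcongpres} and~\ref{congpres}, the latter resting on the substantial machinery of \cite{Ful15_torelli}), reading off from the relators that $\Lambda_n[2]$ splits as a semi-direct product separating the images of $\epiaG$ and $I_\G$; it then concludes $\alpha \in \ptorG$ and uses $\ptorG \cap I_\G = \{1\}$. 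You instead build the separating projection by hand: the mod-$4$ congruence $(I+2A)(I+2B) \equiv I + 2(A+B)$ makes $\sigma(\alpha) = \mathrm{diag}(A) \bmod 2$ a homomorphism $\ppiaG \to (\Z/2)^n$, which kills every $S_{ji} = \Phi(P_{ij})$ and is faithful on $I_\G$; this is exactly the projection onto the ``sign factor'' that the paper extracts from its presentation, but obtained by elementary matrix arithmetic. What your route buys is independence from Section~4's presentation results entirely --- your argument needs only Theorem~\ref{ppiaGgens} and the containment $\Phi(\ppiaG) \leq \Lambda_n[2]$ (correctly cited from Theorem~\ref{exact}), so the corollary could be stated and proved immediately after Theorem~\ref{ppiaGgens}; you also make explicit the conjugation formulas $\iota_k P_{ij} \iota_k \in \{P_{ij}, P_{ij}^{-1}\}$ that the paper leaves as an unproved assertion (``$I_\G$ normalises $\epiaG$''), and you correctly flag the pitfall that a splitting of $\Theta$ does not automatically lift along $\Phi$ because $\ker \Phi|_{\ppiaG} = \ptorG$ is nontrivial. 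What the paper's route buys is brevity: given that Theorems~\ref{raagcongpres} and~\ref{congpres} are already established for the purposes of Theorem~\ref{ptorgens}, its intersection argument is essentially a one-line citation.
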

\begin{proof}The group $\ppiaG$ is generated by $\epiaG$ and $I_\G$ by Theorem~\ref{ppiaGgens}, and $I_\G$ normalises~$\epiaG$. We now establish that $\epiaG \cap I_\G$ is trivial. Suppose $\alpha \in \epiaG \cap I_\G$. By Theorem~\ref{raagcongpres}, the image of $\alpha$ under the canonical map $\Phi : \Autraag \to \gln$ lies in the principal level 2 congruence group $\Lambda_n[2]$. This implies that $\Phi(\alpha)$ is trivial, since $\Lambda_n[2]$ is itself a semi-direct product of groups containing the images of the groups $\epiaG$ and $I_\G$, respectively: this is verified by examining the presentation of $\Lambda_n[2]$ given in Theorem~\ref{congpres}. So the automorphism $\alpha$ must lie in the palindromic Torelli group $\ptorG$, which has trivial intersection with $I_\G$, and hence $\alpha$ is trivial. \end{proof}

\subsection{\label{torsec}A generating set for the palindromic Torelli group}

Using the relators in the presentation given by Theorem~\ref{congpres}, we are now able to obtain an explicit generating set for the palindromic Torelli group $\ptorG$, and so prove Theorem~\ref{ptorgens}. 

Recall that when $\raag$ is a free group, the elementary palindromic automorphism $P_{ij}$ is well-defined for every distinct~$i$ and $j$. The first author defined \emph{doubled commutator transvections} and \emph{separating $\pi$-twists} in $\Aut(F_n)$ ($n \geq 3$) to be conjugates in $\pia_n$ of, respectively, the automorphisms $[P_{12}, P_{13}]$ and $(P_{23}{P_{13}}^{-1}P_{31}P_{32}P_{12}{P_{21}}^{-1})^2$. The latter of these two may seem cumbersome; we refer to \cite[Section 2]{Ful15_torelli} for a simple, geometric interpretation of separating $\pi$-twists.

The definitions of these generators extend easily to the general right-angled Artin groups setting, as follows. Suppose $v_i \in V$ is dominated by $v_j$ and by $v_k$, for distinct $i$, $j$ and $k$. Then \[\chi_1(i,j,k) : = [P_{ij}, P_{ik}] \in \Autraag\] is well-defined, and we define a \emph{doubled commutator transvection} in $\Autraag$ to be a conjugate in $\pia_\G$ of any well-defined $\chi_1(i,j,k)$. Similarly, suppose $[v_i] = [v_j] = [v_k]$ for distinct $i$, $j$ and $k$. Then \[ \chi_2(i,j,k) := (P_{jk}{P_{ik}}^{-1}P_{ki}P_{kj}P_{ij}{P_{ji}}^{-1})^2 \in \Autraag \] is well-defined, and we define a \emph{separating $\pi$-twist} in $\Autraag$ to be a conjugate  in $\pia_\G$ of any well-defined $\chi_2(i,j,k)$.

We now prove Theorem~\ref{ptorgens}, showing that $\ptorG$ is generated by these two types of automorphisms.

\begin{proof}[Proof of Theorem~\ref{ptorgens}]  Recall that $\Theta:=\Phi(\ppiaG) \leq \gln$.  The images in $\Theta$ of our generating set for $\ppiaG$ (Theorem~\ref{ppiaGgens}) form the generators in the presentation for $\Theta$ given in Theorem \ref{raagcongpres}. Thus using a standard argument (see, for example, the proof of \cite[Theorem 2.1]{MKS76}), we are able to take the obvious lifts of the relators of $\Theta$ as a normal generating set of $\ptorG$ in $\ppiaG$, via the short exact sequence
\[ 1 \longrightarrow \ptorG \longrightarrow \ppiaG \longrightarrow \Theta \longrightarrow 1 . \] The only such lifts and their conjugates that are not trivial in $\ppiaG$ are the ones of the form stated in the theorem. \end{proof}

\bibliography{palbib}

\end{document}